\newtheorem{thm}{Theorem}[section]
\newtheorem{lemma}[thm]{Lemma}
\theoremstyle{definition}
\newtheorem{defn}[thm]{Definition}
\newtheorem{prob}[thm]{Problem}
\theoremstyle{remark}
\numberwithin{equation}{section}
\newcommand\mb[1]{\mathbb{#1}}
\newcommand\mc[1]{\mathcal{#1}}
\newcommand\mf[1]{\mathfrak{#1}}
\newcommand{\ignore}[1]{}
\begin{document}

\title{Set-theoretic Problems Concerning Lindelöf Spaces}

%    Information for first author
\author{Franklin D. Tall\makebox[0cm][l]{$^1$}}
\footnotetext[1]{The author was supported in part by NSERC Grant A-7354.\vspace*{1pt}}
%    Address of record for the research reported here
%\address{Department of Mathematics, University of Toronto, Toronto, Ontario, M5S 2E4, {\sc Canada}}
%\email{{\tt f.tall@utoronto.ca}}
%    \thanks will become a 1st page footnote.
%\thanks{$^1$ The author was supported in part by NSERC Grant A-7354.}

%    General info
%\subjclass[2010]{Primary 54A35, 54D20; Secondary 54A25, 54D30.}

%\keywords{Lindelöf, productively Lindelöf, powerfully Lindelöf,
%  Alster, $\sigma$-compact, Menger, Hurewicz, Rothberger, $D$-space, Michael
%  space, points $G_\delta$, indestructible, topological games}
\date{\today}
\maketitle
\begin{abstract}
I survey problems concerning Lindelöf spaces which have partial
set-theoretic solutions.
\end{abstract}
\renewcommand{\thefootnote}{}
\footnote
{\parbox[1.8em]{\linewidth}{$(2010)$ Mathematics Subject Classification. Primary 54A35, 54D20; Secondary 54A25, 54D30.}\vspace*{3pt}}
\renewcommand{\thefootnote}{}
\footnote
{\parbox[1.8em]{\linewidth}{Key words and phrases: Lindelöf, productively Lindelöf, powerfully Lindelöf,
  Alster, $\sigma$-compact, Menger, Hurewicz, Rothberger, $D$-space, Michael
  space, points $G_\delta$, indestructible, topological games}}

\emph{Lindelöf} spaces, i.e. spaces in which every open cover has a
countable subcover, are a familiar class of topological spaces. There
is a significant number of (mainly classic) problems concerning
Lindelöf spaces which are unsolved, but have partial set-theoretic
solutions. For example, consistency is known but independence is not;
large cardinals suffice but are not known to be necessary, and so
forth. The purpose of this note --- which is an expanded version of a
talk given at the 2010 BEST conference --- is to survey such questions
in the hope that set theorists will find them  worthy of
attention. Indeed we strongly suspect that the difficulty of these
problems is more set-theoretic than topological. Not much topological
knowledge is needed to work on them. Undefined terms can be found in
\cite{Engelking1989}. In Sections 1--3 we shall assume all spaces are
$T_3$, except for a remark at the end of Section 2.

I shall start with a collection of problems I have been investigating
for the past couple of years. Several of these problems are classic
and well-known; other are more specialized or recent, but are related
to the classic ones.

\section{Productively Lindelöf spaces}

\begin{defn}
A space $X$ is \emph{productively Lindelöf} if $X \times Y$ is Lindelöf,
for every Lindelöf $Y$. $X$ is \emph{powerfully Lindelöf} if $X^\omega$
is Lindelöf.
\end{defn}

Both of these concepts have been studied for a long time, but the
terminology is recent: \cite{Barr2007a}, \cite{AurichiTall}
respectively. Ernie Michael wondered more than thirty years ago whether:

\begin{prob}\label{prob1}
Is every productively Lindelöf space powerfully Lindelöf?
\end{prob}

This problem is raised in \cite{Przymusinski1984}. The problem is
presumably inspired by the fact that $\sigma$-compact spaces are both
productively Lindelöf and powerfully Lindelöf. Until recently, the
only significant result concerning Problem \ref{prob1} since
Przymusi\'{n}ski's survey was due to Alster \cite{Alster1988}, who solved it affirmatively under CH for regular spaces of weight $\leq \aleph_1$.
%Very recently, however, P. Burton, H. Duanmu, and the author \cite{BDT} proved:
%
%\begin{thm}\label{thm1}
%Assume the Continuum Hypothesis. Then productively Lindelöf spaces are powerfully Lindelöf.
%\end{thm}

%It is not known whether the assumption of $CH$ is necessary.
In the process of proving his \ignore{weaker version of Theorem \ref{thm1}}result, Alster introduced the
following concept, called \emph{Alster} in \cite{Barr2007a}.

\begin{defn}
A space $X$ is \emph{Alster} if whenever $\mc{G}$ is a cover of $X$ by
$G_\delta$'s such that each compact subset of $X$ is covered by finitely
many members of $\mc{G}$, then $\mc{G}$ has a countable subcover.
\end{defn}

\emph{Alster} fits in between $\sigma$-compact and productively Lindelöf:

\begin{thm}[\cite{Alster1988}]
  Every $\sigma$-compact space is Alster. Every Alster space is both
  productively Lindelöf and powerfully Lindelöf. $CH$ implies every
  productively Lindelöf space of weight $\le \aleph_1$ is Alster.
\end{thm}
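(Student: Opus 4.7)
The statement is a compound of three claims of ascending difficulty; I treat them in turn.

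The first, $\sigma$-compact $\Rightarrow$ Alster, is immediate. Write $X=\bigcup_{n<\omega}K_n$ with each $K_n$ compact, and given a $G_\delta$-cover $\mathcal{G}$ satisfying the compact-cover hypothesis, select a finite $\mathcal{F}_n\subseteq\mathcal{G}$ covering $K_n$ for each $n$; then $\bigcup_n\mathcal{F}_n$ is a countable subcover of $X$.

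For Alster $\Rightarrow$ productively Lindelöf, let $Y$ be Lindelöf and $\mathcal{U}$ an open cover of $X\times Y$ by basic rectangles. For each compact $K\subseteq X$, $K\times Y$ is Lindelöf, so extract a countable $\mathcal{U}_K=\{V^K_n\times W^K_n:n<\omega\}\subseteq\mathcal{U}$ covering it. I then define
\[
G_K=\bigcap\Bigl\{\bigcup_{n\in F}V^K_n:F\subseteq\omega\text{ finite},\ K\subseteq\bigcup_{n\in F}V^K_n\Bigr\},
\]
a countable intersection of open sets, hence a $G_\delta$ containing $K$. To see $G_K\times Y\subseteq\bigcup\mathcal{U}_K$, take $(x,y)\in G_K\times Y$: the opens $\{V^K_n:y\in W^K_n\}$ cover the compact set $K$, so a finite subfamily indexed by some $F$ does, and $x\in G_K\subseteq\bigcup_{n\in F}V^K_n$ forces $x\in V^K_n$ and $y\in W^K_n$ for some $n\in F$. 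The family $\{G_K:K\text{ compact}\}$ is then a $G_\delta$-cover of $X$ in which every compact $K$ is covered by a single member (namely $G_K$); applying Alster yields countably many $K_j$ with $X=\bigcup_jG_{K_j}$, so $\bigcup_j\mathcal{U}_{K_j}$ is a countable subfamily of $\mathcal{U}$ covering $X\times Y$. For Alster $\Rightarrow$ powerfully Lindelöf I would upgrade this and show $X^\omega$ is itself Alster (whence Lindelöf, since any open cover is a $G_\delta$-cover trivially satisfying the compact hypothesis): given a $G_\delta$-cover of $X^\omega$ meeting the compact hypothesis, refine it to one by basic $G_\delta$'s $\prod_nH_n$ with each $H_n$ a $G_\delta$ in $X$ and $H_n=X$ outside a countable index set, and use Tychonoff (every compact $L\subseteq X^\omega$ lies inside the compact product $\prod_n\pi_n(L)$) to reduce the compact hypothesis to finite-stage compact hypotheses handled by Alster on finite powers of $X$, then stitch the per-stage countable subcovers together by a diagonal argument. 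The refinement step and the diagonalization are the main technical hurdles here.

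Finally, for CH $+$ weight $\le\aleph_1$ $+$ productively Lindelöf $\Rightarrow$ Alster, I argue contrapositively: suppose $X$ has weight $\le\aleph_1$ and is not Alster, witnessed by a $G_\delta$-cover $\mathcal{G}$ satisfying the compact hypothesis but with no countable subcover. Using the weight bound together with CH, trim or re-index to $\mathcal{G}=\{G_\alpha:\alpha<\omega_1\}$ with no countable subfamily covering $X$. The goal is to construct a Lindelöf $Y$ with $X\times Y$ not Lindelöf, and a natural candidate is $Y=\omega_1\cup\{\infty\}$, the one-point Lindelöfication of discrete $\omega_1$ (points of $\omega_1$ isolated, neighborhoods of $\infty$ cocountable), which is visibly Lindelöf. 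Writing $G_\alpha=\bigcap_mU_{\alpha,m}$ with decreasing open $U_{\alpha,m}$, assemble an open cover of $X\times Y$ from the rectangles $U_{\alpha,m}\times\{\alpha\}$ together with a suitable open neighborhood of $X\times\{\infty\}$, engineered so that any countable subcollection must miss some pair $(x,\infty)$; the existence of such an $x$ is precisely where the non-Alster hypothesis on $\mathcal{G}$ is used. The principal obstacle is this engineering step, together with the CH-driven re-indexing of $\mathcal{G}$ — both require the productively Lindelöf and weight hypotheses to interact delicately with the $G_\delta$ structure of the witness.
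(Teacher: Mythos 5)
The paper itself gives no proof of this theorem (it is cited to Alster), so I am judging your argument on its own merits. The first claim and the ``productively Lindel\"of'' half of the second are correct: your $G_K$ construction is exactly the standard argument, and the verification that $G_K\times Y\subseteq\bigcup\mathcal{U}_K$ is right. The ``powerfully Lindel\"of'' half, however, contains a real gap at the step you flag as a ``technical hurdle'': when you refine a $G_\delta$-cover of $X^\omega$ to one by product-form $G_\delta$'s, each member of the original cover gets replaced by (typically uncountably many) smaller sets, and the hypothesis that every compact set is covered by \emph{finitely many} members is not inherited by refinements. This is precisely why closure of the Alster class under countable products requires a genuine argument (Alster's own, reworked in the Barr--Kennison--Raphael paper cited here) rather than a reduction to finite powers plus diagonalization; as written, this part is a plan, not a proof.

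The third claim is where the approach actually fails. Your candidate $Y$, the one-point Lindel\"ofication of discrete $\omega_1$, is a Lindel\"of $P$-space: every open set containing $\infty$ is co-countable, so every $G_\delta$ is open, so $Y$ is itself Alster and hence productively Lindel\"of. Concretely, for \emph{any} Lindel\"of $X$ and \emph{any} open cover of $X\times Y$: countably many basic rectangles $V_n\times W_n$ cover $X\times\{\infty\}$ with each $W_n$ co-countable, so all but countably many columns $X\times\{\alpha\}$ are already covered, and the remaining countably many columns are Lindel\"of. No ``engineering'' of the cover can defeat this, so the contrapositive can never be completed with this $Y$. Note also that your sketch never actually invokes $CH$ or the weight bound. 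The correct argument is in the spirit of the Michael-style construction the paper does exhibit (in its proof that $CH$ implies productively Lindel\"of spaces are projectively $\sigma$-compact): one uses $CH$ together with weight $\le\aleph_1$ to run an $\omega_1$-recursion choosing points $p_\beta\in X$ outside the union of the first $\beta$ members of the non-Alster witness (possible because no countable subfamily covers), and then topologizes the $p_\beta$'s together with a suitable Lindel\"of core so that $\{\langle p_\beta,p_\beta\rangle:\beta<\omega_1\}$ is closed discrete in $X\times Y$; the delicate point, entirely absent from your sketch, is arranging that this $Y$ is Lindel\"of while the diagonal set is not countably compactly absorbed.
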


\begin{prob}
  Is every productively Lindelöf space Alster?
\end{prob}
%Under CH, the answer is affirmative \cite{BDT}.
Nothing is known if CH fails.  Alster spaces in which compact sets are $G_\delta$ --- in particular,
metrizable spaces --- are easily seen to be $\sigma$-compact, while it
is also easy to see that $\sigma$-compact spaces are Alster.

We now turn to an apparently unrelated problem.

\section{$D$-spaces}

\begin{defn}
  A space $\langle X, \mc{T} \rangle$ is $D$ if whenever $f : X \to
  \mc{T}$ is an \emph{open neighborhood assignment}, i.e. $x \in
  f(x)$ for each $x \in X$, there is a closed discrete $D \subseteq X$ such that $\bigcup
  \{f(x) : x \in D \} = X$.
\end{defn}

There are two recent surveys of $D$-spaces: \cite{Eisworth2007} and
\cite{Gruenhage2009}. The major question of interest (implicit in \cite{vanDouwenPfeffer}) is:

\begin{prob}
  Is every Lindelöf space a $D$-space?
\end{prob}

Despite much effort, very little is known about this problem. A
breakthrough occurred when Aurichi \cite{Aurichi} showed that an
apparently minor strengthening of the Lindelöf property did yield $D$.

\begin{defn}
  A space $X$ is \emph{Menger} if whenever $\{ \mc{U}_n \}_{n <
    \omega}$ are open covers of $X$, there exist finite $\mc{V}_n
  \subseteq \mc{U}_n$ for each $n$, such that $\bigcup_{n<\omega} \mc{V}_n$ covers
  $X$.
\end{defn}

\begin{thm}[\cite{Aurichi}]\label{thm4}
  Menger spaces are $D$.
\end{thm}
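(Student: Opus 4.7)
The plan is to use the game-theoretic characterization of Menger spaces due to Pawlikowski: $X$ is Menger if and only if Player ONE has no winning strategy in the length-$\omega$ Menger game $G_M$, in which at round $n$, ONE plays an open cover $\mathcal{U}_n$ of $X$ and TWO responds with a finite $\mathcal{V}_n \subseteq \mathcal{U}_n$, TWO winning iff $\bigcup_{n<\omega} \mathcal{V}_n$ covers $X$. Fix an open neighborhood assignment $f: X \to \mathcal{T}$. The plan is to argue by contradiction: assuming no closed discrete $D \subseteq X$ with $\bigcup f[D] = X$ exists, I will construct a winning strategy $\sigma$ for ONE in $G_M$, contradicting the Menger property.

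For the strategy, at round $n$ I arrange that ONE plays covers whose members are of the form $f(x)$ for $x \in X$, so TWO's finite selections correspond to finite sets $F_n \subseteq X$. Let $E_n = F_0 \cup \cdots \cup F_{n-1}$. The core idea of $\sigma$ is to play, at round $n$, the cover
\[
\mathcal{U}_n = \{f(y) : y \in X \setminus \textstyle\bigcup_{x \in E_n} f(x)\} \cup \{\textstyle\bigcup_{x \in E_n} f(x)\},
\]
forcing TWO either to make no progress (in which case ONE wins, since $\bigcup f[E_n] \ne X$ by assumption) or to add new points $F_n \subseteq X \setminus \bigcup f[E_n]$. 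This builds right-separation of the cumulative $D_\infty := \bigcup_n F_n$: if $x \in F_n$ and $y \in F_m$ with $m > n$, then $y \notin f(x)$, so $x$ is isolated from all later choices. If TWO somehow wins the play, $D_\infty$ $f$-covers $X$, and I aim to use the right-separation plus $T_3$ regularity to extract from $D_\infty$ a closed discrete witness $D$, contradicting the no-$D$ hypothesis and proving $\sigma$ is winning.

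The main obstacle is that right-separation alone does not yield closed-discreteness: it controls $y \notin f(x)$ for $y$ chosen strictly later than $x$, but not for points of earlier rounds or of the same round $F_n$. To close this gap I would enumerate in advance a countable list of "separation tasks" (each demanding that some specific finite pair be separated by shrunken basic open sets) and discharge one per round: at round $n$, using $T_3$ and the finiteness of $E_n$, ONE refines each $f(y)$ to a smaller open neighborhood that isolates $y$ from a scheduled finite subset of $E_n$, while still covering the uncovered part of $X$. This diagonal bookkeeping — the standard device in game-theoretic arguments of this type — is where I expect the bulk of the care to lie, since it must simultaneously preserve the "force progress" feature of the naive strategy and guarantee that $D_\infty$ ends up closed discrete. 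Once $\sigma$ is correctly defined, TWO winning yields a legitimate closed discrete $D$ with $\bigcup f[D] = X$, contradicting our assumption; hence $\sigma$ is winning for ONE, contradicting Pawlikowski's characterization of Menger and completing the proof.
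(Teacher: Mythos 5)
Your overall plan is exactly the right one --- it is in substance Aurichi's own argument: fix the neighborhood assignment $f$, have ONE play covers built from the sets $f(y)$ for $y$ not yet covered (together with the single set $\bigcup_{x\in E_n}f(x)$), use the Hurewicz--Pawlikowski characterization of Menger via ONE having no winning strategy in the Menger game, and extract the closed discrete set from a play that TWO wins. The problem is where you stop: you declare the closed-discreteness of $D_\infty=\bigcup_n F_n$ to be the ``main obstacle,'' propose to handle it by an unspecified scheme of ``separation tasks'' and ``diagonal bookkeeping,'' and leave that scheme entirely unexecuted. As written, the one step on which the whole proof turns is missing, so the argument is incomplete.

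The good news is that the obstacle you perceive is not there, and no bookkeeping (and no regularity --- $T_1$ suffices) is needed. Suppose TWO wins a play against your strategy, so that $\bigcup\{f(x):x\in D_\infty\}=X$, and recall that by construction $F_m\cap\bigcup\{f(x):x\in E_m\}=\emptyset$ for every $m$. Given any $y\in X$, choose $n$ and $x\in F_n$ with $y\in f(x)$. For every $m>n$ we have $x\in E_m$, hence $F_m\cap f(x)=\emptyset$; therefore $f(x)\cap D_\infty\subseteq F_0\cup\dots\cup F_n$, a finite set. You do not need $y'\notin f(x)$ for the earlier or same-round points $y'$: it is enough that \emph{every} point of $X$ has a neighborhood meeting $D_\infty$ in a finite set. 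Since finite sets are closed in a $T_1$ space, $f(x)\setminus\bigl((F_0\cup\dots\cup F_n)\setminus\{y\}\bigr)$ is an open neighborhood of $y$ meeting $D_\infty$ in at most $\{y\}$; applied to $y\notin D_\infty$ this shows $D_\infty$ is closed, and applied to $y\in D_\infty$ it shows $D_\infty$ is discrete. So $D_\infty$ itself is the required closed discrete set, the contradiction goes through, and your strategy is winning for ONE. Replace the bookkeeping paragraph with this observation and the proof is complete.
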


In \cite{AurichiTall}, a connection between ``productively Lindelöf''
and ``$D$'' was established:

\begin{thm}
  Alster implies Menger.
\end{thm}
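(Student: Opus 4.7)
The plan is to feed a suitable $G_\delta$-cover built out of the given sequence of open covers into the Alster hypothesis, and then to diagonalize over the resulting countable subcover.

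Let $X$ be Alster and let $\{\mathcal{U}_n\}_{n<\omega}$ be a sequence of open covers of $X$. For each sequence $\vec{\mathcal{V}} = \langle \mathcal{V}_n : n < \omega\rangle$ with $\mathcal{V}_n \in [\mathcal{U}_n]^{<\omega}$, set
\[
G(\vec{\mathcal{V}}) \;=\; \bigcap_{n<\omega} \bigcup \mathcal{V}_n,
\]
which is a $G_\delta$ subset of $X$. Let $\mathcal{G}$ be the collection of all such $G(\vec{\mathcal{V}})$. First I would verify that $\mathcal{G}$ covers $X$: given $x \in X$, choose for each $n$ some $U_{n,x} \in \mathcal{U}_n$ containing $x$ and take $\mathcal{V}_n = \{U_{n,x}\}$; then $x \in G(\vec{\mathcal{V}})$. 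Next, I would check that each compact $K \subseteq X$ is covered by a single member of $\mathcal{G}$ (in particular by finitely many): for each $n$, compactness yields a finite $\mathcal{V}_n^K \subseteq \mathcal{U}_n$ with $K \subseteq \bigcup \mathcal{V}_n^K$, and then $K \subseteq G(\langle \mathcal{V}_n^K \rangle_n) \in \mathcal{G}$.

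With those two properties in hand, the Alster property applies and produces a countable subcover $\{G(\vec{\mathcal{V}}^m) : m < \omega\}$ of $X$, where $\vec{\mathcal{V}}^m = \langle \mathcal{V}_{n,m} : n < \omega \rangle$ with each $\mathcal{V}_{n,m} \in [\mathcal{U}_n]^{<\omega}$. Now comes the diagonal step: define $\mathcal{V}_n := \mathcal{V}_{n,n}$, a finite subset of $\mathcal{U}_n$. For any $x \in X$ there is some $m$ with $x \in G(\vec{\mathcal{V}}^m)$, which in particular means $x \in \bigcup \mathcal{V}_{m,m} = \bigcup \mathcal{V}_m$. Hence $\bigcup_{n<\omega} \mathcal{V}_n$ covers $X$, witnessing the Menger property.

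The only nontrivial point is the choice of the cover $\mathcal{G}$ — once one recognizes that finite subfamilies of each $\mathcal{U}_n$ are exactly the right building blocks (their intersection is automatically a $G_\delta$, and they naturally capture compact sets), the argument becomes essentially a standard diagonalization. I do not anticipate any genuine obstacle; the verifications that $\mathcal{G}$ covers $X$, that each compact set is covered by one element of $\mathcal{G}$, and the final diagonal extraction are all routine.
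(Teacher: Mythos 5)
Your argument is correct and is precisely the standard proof of this result: the paper itself gives no proof, only a citation to Aurichi--Tall, and your construction (the $G_\delta$ sets $\bigcap_{n}\bigcup\mathcal{V}_n$ for finite $\mathcal{V}_n\subseteq\mathcal{U}_n$, verified to cover $X$ and to absorb compact sets, followed by the diagonal extraction from the countable subcover) is exactly the argument given there. No gaps.
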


It is thus natural to ask:

\begin{prob}
  Does productively Lindelöf imply $D$?
\end{prob}

After a series of partial results, in \cite{TallMay} I proved:

\begin{thm}\label{thm6}
  $CH$ implies productively Lindelöf spaces are Menger and hence $D$.
\end{thm}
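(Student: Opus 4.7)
Since Theorem~\ref{thm4} already supplies Menger $\Rightarrow D$, it suffices to prove that under CH every productively Lindelöf $X$ is Menger. I would argue by contradiction: suppose $X$ is productively Lindelöf but not Menger, and construct a Lindelöf space $Y$ with $X \times Y$ not Lindelöf, violating productive Lindelöfness.

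The first step is to extract a combinatorial witness in $\omega^\omega$. Lindelöfness of $X$ combined with non-Mengerness produces countable open covers $\mathcal{U}_n = \{U_{n,k} : k < \omega\}$ which, after replacing $U_{n,k}$ by $\bigcup_{j \le k} U_{n,j}$, may be assumed $\subseteq$-increasing in $k$, and such that no finite selections $\{\mathcal{V}_n \subseteq \mathcal{U}_n\}_{n<\omega}$ cover $X$. Setting $\psi(x)(n) = \min\{k : x \in U_{n,k}\}$, the failure of Menger is precisely the assertion that $\psi[X]$ is a $\le^*$-unbounded subset of $\omega^\omega$.

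The second step is to use CH to build the partner $Y$. CH gives $\mathfrak{d} = \aleph_1$, so I would fix a scale $\{g_\alpha : \alpha < \omega_1\} \subseteq \omega^\omega$ that is $\le^*$-increasing and dominating. Following Michael's CH-style construction of a Lindelöf non-productively-Lindelöf space, build a Lindelöf $Y$ of weight $\aleph_1$ whose points are indexed by $\{g_\alpha\}$ and whose basic neighborhoods correspond to the cylinders $[g_\alpha \restriction n]$ in $\omega^\omega$. An open cover of $X \times Y$ is then assembled from rectangles of the form $U_{n,\, g_\alpha(n)} \times V_{\alpha,n}$; a countable subcover would force a countable subfamily of $\{g_\alpha\}$ to $\le^*$-dominate $\psi[X]$. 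But any countable subset of a scale is $\le^*$-bounded, while $\psi[X]$ is unbounded, so some $x \in X$ is left uncovered, contradicting the assumed productive Lindelöfness of $X$.

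The main obstacle is designing $Y$ and the cover carefully enough that the verification actually goes through: since $\psi$ is only upper semicontinuous on $X$ (its level sets $\psi^{-1}([s])$ are $F_\sigma$, not open), one cannot simply replace $X$ by $\psi[X] \subseteq \omega^\omega$ and invoke a ready-made Michael-type theorem. The open cover of $X \times Y$ must be written directly in terms of the original $U_{n,k}$, and one must verify simultaneously that $Y$ is Lindelöf and that no countable subfamily of the chosen rectangles covers $X \times Y$. This bookkeeping --- pairing the scale with the non-continuous encoding $\psi$ --- is the technical heart of the argument.
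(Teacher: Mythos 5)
There is a genuine gap, and you have located it yourself: the part you describe as ``bookkeeping'' --- designing $Y$, proving it is Lindelöf, and proving that no countable subfamily of your rectangles covers $X\times Y$ --- is not bookkeeping but the entire content of the theorem, and as set up it does not go through. A set $\{g_\alpha:\alpha<\omega_1\}$ topologized by the cylinders $[g_\alpha\restriction n]$ is just a separable metrizable subspace of $\omega^\omega$, hence useless for destroying Lindelöfness of a product; to run a Michael-type argument the $g_\alpha$'s must be isolated and glued to a countable core (as in $\mb{Q}\cup D$ with $D$ concentrated on $\mb{Q}$), and non-Lindelöfness of $X\times Y$ is then witnessed by an uncountable closed discrete set such as $\{\langle x_\alpha,g_\alpha\rangle:\alpha<\omega_1\}$. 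Verifying closed discreteness (or defeating every countable subfamily of your cover) requires the neighborhoods of the core points of $Y$ to interact with open sets of $X$ through the encoding, and this is exactly where the non-continuity of $\psi$ kills the plan: $\psi^{-1}$ of a cylinder is only $F_\sigma$, so a basic rectangle about a point $\langle x,q\rangle$ with $q$ in the core cannot be controlled. The paper resolves this not by more careful bookkeeping but by making the encoding continuous: by Lemma \ref{lemma10} the covers $\mc{U}_n$ may be taken to be refined by pullbacks of covers of separable metrizable continuous images, so the diagonal map sends $X$ onto a productively Lindelöf separable metrizable $\hat Y$; the $CH$ Michael-type construction (the proof of Theorem \ref{thm11}) shows $\hat Y$ is $\sigma$-compact, hence Menger, and finite subcovers pull back to $X$ (Theorem \ref{thm9}). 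Your outline is morally this argument with the reduction to the metrizable case omitted, and that reduction is the missing idea.

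A smaller point: for increasing countable covers, the failure of the Menger selection says that $\psi[X]$ is \emph{dominating} in $(\omega^\omega,\le^*)$, not merely unbounded; $\le^*$-unboundedness is the reformulation of the failure of the \emph{Hurewicz}-type selection. Since dominating implies unbounded, the contradiction you aim for at the end would still be available, but the stated equivalence is false, and in models of $\mf{b}=\aleph_1<\mf{d}$ the distinction is exactly what separates the Hurewicz and Menger conclusions discussed later in the paper.
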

%This has now been superseded by Theorem \ref{thm1}, but that proof is difficult.
The proof of \ref{thm6} is easy and instructive.  Of course, ``productively Lindelöf'' is a strong hypothesis, but even
strengthenings of Lindelöf such as ``hereditarily Lindelöf'' are not
known --- even consistently --- to imply $D$. There is one more
set-theoretic result worth pointing out here:

\begin{thm}[\cite{AurJunLar}]
  It is consistent that every Lindelöf space of size $\aleph_1$ is $D$.
\end{thm}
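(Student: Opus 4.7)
The plan is to prove the consistency result by working under the cardinal-arithmetic hypothesis $\mathfrak{b} > \aleph_1$, which is well known to be consistent with ZFC (it holds, for example, in any model of $\mathrm{MA} + \neg\mathrm{CH}$, or in the iterated Hechler/Laver extension of a model of $\mathrm{CH}$). Under this hypothesis I would prove the apparently stronger statement that every Lindelöf space of size $\aleph_1$ is Menger; the $D$-property then follows at once from Theorem \ref{thm4}.

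Fix such a space $X$ and a sequence $\{\mc{U}_n\}_{n < \omega}$ of open covers of $X$. Applying the Lindelöf property to each $\mc{U}_n$, I would extract a countable subcover and enumerate it as $\{U_{n,k} : k < \omega\}$. For each $x \in X$, define
\[
g_x(n) = \min\{k < \omega : x \in U_{n,k}\},
\]
so $g_x \in \omega^\omega$. The family $\{g_x : x \in X\}$ has cardinality at most $|X| \leq \aleph_1$.

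Since $\aleph_1 < \mathfrak{b}$, this family is dominated: there exists $h \in \omega^\omega$ with $g_x \leq^* h$ for every $x \in X$. Set $\mc{V}_n = \{U_{n,k} : k \leq h(n)\} \subseteq \mc{U}_n$, which is finite. For each $x \in X$ the inequality $g_x(n) \leq h(n)$ holds for cofinitely many $n$, so for such $n$ we have $x \in U_{n, g_x(n)} \in \mc{V}_n$. Hence $\bigcup_{n<\omega} \mc{V}_n$ covers $X$, and $X$ is Menger.

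There is no serious technical obstacle here beyond identifying the right cardinal invariant: the size restriction $|X| \leq \aleph_1$ is precisely what bounds the diagonalizing family $\{g_x\}$ below $\mathfrak{b}$, after which the conclusion is a textbook application of the unbounding number together with Theorem \ref{thm4}. The conceptual subtlety, and the reason the approach does not generalize, is that removing the size hypothesis collapses the argument entirely — so this route says nothing about the underlying Problem of whether \emph{all} Lindelöf spaces are $D$.
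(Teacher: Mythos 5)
Your argument is correct and is essentially the paper's: reduce to showing that small Lindelöf spaces are Menger and then invoke Theorem \ref{thm4}. The only difference is that you assume $\aleph_1 < \mathfrak{b}$ so that the family $\{g_x : x \in X\}$ is $\leq^*$-bounded by a single $h$, whereas the folklore lemma the paper quotes gets Menger from the weaker hypothesis $\aleph_1 < \mathfrak{d}$ (a family of size $< \mathfrak{d}$ is merely non-dominating, so some $h$ satisfies $h(n) > g_x(n)$ for infinitely many $n$, which is all the Menger property needs); your stronger hypothesis is still consistent, and in compensation the cofinite escape it provides yields the Hurewicz property rather than just Menger.
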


This is now best seen as a consequence of Theorem \ref{thm4} together
with the following folklore result (see e.g. \cite{Tall2009a}):

\begin{thm}
  Lindelöf spaces of cardinality $< \mf{d}$ are Menger.
\end{thm}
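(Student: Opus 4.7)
\medskip

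\noindent\textbf{Proof proposal.} The plan is to translate the combinatorial content of ``dominating family'' directly into the Menger property, using the Lindelöf hypothesis to reduce to countable covers.

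Let $X$ be Lindelöf with $|X| < \mathfrak{d}$, and suppose $\{\mathcal{U}_n\}_{n<\omega}$ are open covers of $X$. First I would use Lindelöfness to replace each $\mathcal{U}_n$ by a countable subcover; enumerating it as $\{U_{n,k} : k < \omega\}$ and setting $W_{n,k} = \bigcup_{i \le k} U_{n,i}$, we get an increasing sequence $W_{n,0} \subseteq W_{n,1} \subseteq \cdots$ of open sets with $\bigcup_k W_{n,k} = X$. The idea is that choosing the finite subfamily $\mathcal{V}_n = \{U_{n,0},\dots,U_{n,g(n)}\}$ is the same as choosing the threshold $g(n)$; so the task becomes finding a single $g \in \omega^\omega$ such that every point of $X$ lies in $W_{n,g(n)}$ for at least one $n$.

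Next I would encode each point as a function. For $x \in X$, set
\[
  f_x(n) = \min\{k : x \in W_{n,k}\},
\]
so that $f_x \in \omega^\omega$, and $x \in W_{n,m}$ iff $f_x(n) \le m$. The family $\{f_x : x \in X\}$ has cardinality at most $|X| < \mathfrak{d}$, so it is not a dominating family. Hence there exists $g \in \omega^\omega$ such that for every $x \in X$, $g \not\le^* f_x$, which means the set $\{n : f_x(n) < g(n)\}$ is infinite, and in particular nonempty.

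Finally I would verify the Menger conclusion: defining $\mathcal{V}_n = \{U_{n,0},\dots,U_{n,g(n)}\} \subseteq \mathcal{U}_n$, for each $x \in X$ pick any $n$ with $f_x(n) < g(n)$; then $x \in W_{n,f_x(n)} \subseteq W_{n,g(n)} = \bigcup \mathcal{V}_n$. Thus $\bigcup_n \mathcal{V}_n$ covers $X$. There is no real obstacle here — the ``hard'' step is recognizing the translation between covers and $\omega^\omega$; once each point is assigned the function $f_x$ recording where it first enters the accumulated covers, the definition of $\mathfrak{d}$ delivers the required uniform bound $g$ essentially for free.
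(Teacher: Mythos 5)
Your proof is correct, and it is exactly the standard folklore argument that the paper alludes to (the paper itself gives no proof, only a citation): reduce to countable increasing covers, code each point $x$ as the function $f_x$ recording when $x$ enters each cover, and use $|X|<\mathfrak{d}$ to find $g$ not dominated, which yields the finite selections $\mathcal{V}_n$. The only cosmetic point is that you need $g\not\le^* f_x$ to give at least one $n$ with $f_x(n)<g(n)$, which you correctly note; nothing further is required.
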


The key to proving Theorem \ref{thm6} lies in a concept introduced by
Arhangel'ski\u\i{} \cite{Arhangelskii2000}:

\begin{defn}
  A space $X$ is \emph{projectively $\sigma$-compact} if every
  continuous image of $X$ in a separable metrizable space is
  $\sigma$-compact.
\end{defn}

One can make analogous definitions, e.g. of \emph{projectively
  Menger}. There are projectively $\sigma$-compact space that are not
$\sigma$-compact --- see \cite{Arhangelskii2000} and
\cite{TallMay}. Arhangel'ski\u\i{} in this paper also in effect
proved:

\begin{thm}\label{thm9}
  Lindelöf projectively Menger spaces are Menger.
\end{thm}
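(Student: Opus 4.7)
The plan is to factor the verification of the Menger property through a continuous image of $X$ in a separable metrizable space, where the Menger property is \emph{given} by the projective hypothesis. Given open covers $\{\mathcal{U}_n\}_{n<\omega}$ of $X$, I want to construct a continuous $f\colon X\to M$ into a separable metrizable $M$ so that each $\mathcal{U}_n$ is (after a harmless refinement) precisely the $f$-preimage of a countable open cover of $Y=f(X)$; then a Menger selection downstairs pulls back to one upstairs.

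First I would use the Lindelöf property to pass to countable subcovers, and then use that $X$, being $T_3$ and Lindelöf, is in fact $T_{3\frac12}$ (indeed $T_4$) to refine each $\mathcal{U}_n$ to a countable \emph{cozero} cover $\{U_{n,k}\}_{k<\omega}$ with each $U_{n,k}$ contained in some member of $\mathcal{U}_n$: for each $x$ and each $U\in\mathcal{U}_n$ with $x\in U$, choose a continuous $[0,1]$-valued function on $X$ vanishing off $U$ and positive at $x$, giving a cozero neighborhood of $x$ inside $U$; countably many suffice to cover $X$ by Lindelöfness. Pick continuous $f_{n,k}\colon X\to[0,1]$ with $U_{n,k}=\mathrm{coz}(f_{n,k})$, and diagonalize into the map
\[
f\colon X\to[0,1]^{\omega\times\omega},\qquad f(x)=(f_{n,k}(x))_{n,k<\omega}.
\]
Set $Y=f(X)$; this is a continuous image of $X$ in a separable metrizable space, hence Menger by hypothesis.

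For each $n,k$ let $W_{n,k}=\{y\in Y:y_{n,k}>0\}$, which is open in $Y$, and observe $f^{-1}(W_{n,k})=U_{n,k}$; in particular each $\mathcal{W}_n=\{W_{n,k}\}_k$ is a countable open cover of $Y$. Applying Menger to $\{\mathcal{W}_n\}_n$ yields finite $K_n\subseteq\omega$ such that $\bigcup_n\{W_{n,k}:k\in K_n\}$ covers $Y$. Pulling back through $f$, the finite families $\{U_{n,k}:k\in K_n\}\subseteq\mathcal{U}'_n$ cover $X$; replacing each $U_{n,k}$ by a member of $\mathcal{U}_n$ containing it gives the desired Menger selection from the original covers.

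The conceptual heart of the argument is packaging a \emph{sequence} of covers into one continuous map whose image retains enough information to reflect Menger selections back to $X$; the main technical obstacle is really just the cozero refinement in Step 1, which is where the regularity/normality of $X$ is used. Once that is in place, the projective Menger hypothesis supplies the selection on $Y$ and the preimage operation does the rest.
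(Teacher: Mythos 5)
Your argument is correct and is essentially the paper's proof: the paper invokes Lemma \ref{lemma10} to factor each cover through a separable metrizable continuous image and then diagonalizes, whereas you prove that lemma inline via the cozero refinement (using that $T_3$ plus Lindelöf gives complete regularity) and build the single diagonal map $f\colon X\to[0,1]^{\omega\times\omega}$ directly. The selection-and-pullback step at the end matches the paper's, so there is nothing substantively different here.
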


This was specifically stated and proved in \cite{Kocinac} and
\cite{BonCamMat}. The method of proof is worth pointing out, since it
often allows one to take results about Lindelöf metrizable spaces and
transfer them to arbitrary Lindelöf spaces.

\begin{lemma}[\cite{Engelking1989}]\label{lemma10}
  Let $\mc{U}$ be an open cover of a Lindelöf space $X$. There is then
  a continuous function $f : X \to Y$, $Y$ separable metrizable, and
  an open cover $\mc{V}$ of $Y$ such that $\{ f^{-1}(V) : V \in
  \mc{V}\}$ refines $\mc{U}$.
\end{lemma}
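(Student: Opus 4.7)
The plan is to collapse the cover $\mathcal{U}$ down to a countable subcover, then encode that countable cover as a map into the Hilbert cube $[0,1]^\omega$ via Urysohn's lemma, using the fact that $T_3$ plus Lindelöf gives normality. The space $Y$ will be the continuous image in $[0,1]^\omega$, and the cover $\mathcal{V}$ will be pulled from the coordinate projections.

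First I would extract a countable subcover $\{U_n : n<\omega\}$ from $\mathcal{U}$ by Lindelöfness, and invoke the classical fact that a regular Lindelöf space is normal. Next I would shrink this countable cover: find open sets $W_n$ with $\overline{W_n}\subseteq U_n$ such that $\{W_n\}$ still covers $X$. This shrinking step is the only place the hypotheses are really used; it is a standard consequence of normality applied inductively to the closed sets $X\setminus\bigcup_{k\ne n} U_k$ (or more carefully, a transfinite/finite inductive shrinking argument on countable covers). Then, for each $n$, Urysohn's lemma yields a continuous $f_n : X \to [0,1]$ with $f_n\equiv 1$ on $\overline{W_n}$ and $f_n\equiv 0$ on $X\setminus U_n$.

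Having the $f_n$'s in hand, I would define $f : X \to [0,1]^\omega$ by $f(x) = (f_n(x))_{n<\omega}$ and let $Y = f(X)$ with the subspace topology. As a subspace of $[0,1]^\omega$, $Y$ is separable metrizable, and $f : X\to Y$ is continuous. For each $n$, set $V_n = Y \cap \{y \in [0,1]^\omega : y_n > 0\}$; these are open in $Y$, and I would take $\mathcal{V} = \{V_n : n<\omega\}$. The verification is then immediate: $f^{-1}(V_n) = \{x : f_n(x) > 0\} \subseteq U_n$, so $\{f^{-1}(V_n)\}$ refines $\mathcal{U}$; and because every $x\in X$ lies in some $W_n$, we have $f_n(x)=1$, hence $f(x)\in V_n$, so $\mathcal{V}$ covers $Y$.

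The only mildly delicate step is the shrinking of the countable open cover, since one must ensure the $W_n$ still cover $X$; I expect this to be the main obstacle, but it is standard folklore following from countable inductive application of normality (equivalently, one may use a partition-of-unity argument, producing continuous $g_n$ with $\sum g_n \equiv 1$ and $\operatorname{supp}(g_n)\subseteq U_n$, and letting $W_n = \{g_n > 0\}$). Everything else is bookkeeping about the map into the Hilbert cube.
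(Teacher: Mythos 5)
The paper does not actually prove this lemma; it cites \cite{Engelking1989} and moves on. Your construction --- a countable subcover, countably many Urysohn functions assembled into a diagonal map into $[0,1]^\omega$, with $\mathcal{V}$ the traces of the subbasic open sets $\{y : y_n>0\}$ --- is the standard argument one finds in the reference, so in outline you are doing the right thing.

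However, the step you yourself flag as delicate is the one whose stated justification is wrong. Producing an open shrinking $\{W_n\}$ of a countable open cover of a normal space with $\overline{W_n}\subseteq U_n$ and $\bigcup_n W_n = X$ is \emph{not} a consequence of normality alone: by Dowker's theorem this property is equivalent (for normal spaces) to countable paracompactness, and Dowker spaces are normal spaces where it fails. Moreover the particular closed sets you name, $X\setminus\bigcup_{k\ne n}U_k$, consist of the points lying in $U_n$ and in no other $U_k$; they need not cover $X$ (any point in two of the $U_k$ lies in none of them), and the usual inductive shrinking breaks down exactly at points belonging to infinitely many $U_n$. The repair is available from the standing hypotheses: a regular Lindelöf space is paracompact, hence countably paracompact, and normal countably paracompact spaces do admit such shrinkings. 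Alternatively, and more simply, you can bypass the shrinking lemma altogether: for each $x$ use regularity to pick an open $O_x$ with $x\in O_x$ and $\overline{O_x}\subseteq U$ for some $U\in\mathcal{U}$, extract a countable subcover $\{O_i\}_{i<\omega}$ by Lindel\"ofness, let $f_i$ be a Urysohn function equal to $1$ on $\overline{O_i}$ and $0$ off the corresponding $U$, and run your Hilbert-cube argument with these $f_i$ (the cover $\mathcal{V}=\{V_i\}_{i<\omega}$ then refines $\mathcal{U}$ via $f^{-1}(V_i)\subseteq U$). With either substitution the proof is complete.
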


To prove Theorem \ref{thm9}, given a sequence $\{ \mathcal{U}_n \}_{n
  < \omega}$ of such covers, find the corresponding $f_n$'s, $Y_n$'s
and $\mathcal{V}_n$'s. Then the diagonal product of the $f_n$'s maps
$X$ onto a subspace $\hat{Y}$ of $\prod Y_n$. $\hat{Y}$ is
$\sigma$-compact, hence Menger, so we can take finite subsets of the
$\mathcal{V}_n$'s forming a cover and then pull them back to $X$ to
find the required finite subsets of the $\mathcal{U}_n$'s.

The other ingredient in the proof of Theorem \ref{thm9} is:

\begin{thm}\label{thm11}
  $CH$ implies productively Lindelöf spaces are projectively
  $\sigma$-compact.
\end{thm}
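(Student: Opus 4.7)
The plan is to reduce the claim to Alster's result (stated in the excerpt) that, under CH, productively Lindelöf spaces of weight at most $\aleph_1$ are Alster, together with the observation that metrizable Alster spaces are $\sigma$-compact.

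First I would take an arbitrary continuous map $f : X \to M$ with $M$ separable metrizable, and replace $M$ by its image $Z = f(X)$, which is still separable metrizable (as a subspace of a separable metric space), and in particular has weight $\leq \aleph_0$. The first substantive step is to verify that the property ``productively Lindelöf'' is preserved by continuous surjections: given any Lindelöf $Y$, the product $X \times Y$ is Lindelöf by hypothesis, and the map $f \times \mathrm{id}_Y : X \times Y \to Z \times Y$ is a continuous surjection, so $Z \times Y$ is a continuous image of a Lindelöf space, hence Lindelöf. Thus $Z$ is productively Lindelöf.

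Next, since $Z$ has weight at most $\aleph_1$, Alster's theorem (the last clause of the first displayed theorem in the excerpt) applies under CH to give that $Z$ is Alster. Finally, since $Z$ is separable metrizable, every compact subset of $Z$ is a $G_\delta$, and as already remarked in the excerpt, Alster spaces whose compact sets are $G_\delta$ are $\sigma$-compact. Hence $Z = f(X)$ is $\sigma$-compact, which is exactly what projective $\sigma$-compactness demands.

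There is no real obstacle here; the argument is essentially a bookkeeping exercise assembling results already quoted in the excerpt. The only point that requires a moment's thought is the preservation of ``productively Lindelöf'' under continuous images, and this follows immediately from preservation of ``Lindelöf'' under continuous images applied to $f \times \mathrm{id}_Y$. The real content --- both the CH hypothesis and the weight $\leq \aleph_1$ restriction --- is absorbed into the cited Alster theorem, which is why Theorem \ref{thm11} needs no further set-theoretic input beyond CH.
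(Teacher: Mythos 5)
Your argument is correct, but it takes a genuinely different route from the paper's. The paper proves the theorem directly by a Michael-type construction: embed the separable metrizable image $X$ in $[0,1]^{\aleph_0}$, use $CH$ to build a decreasing $\omega_1$-sequence of $G_\delta$'s about the complement that is cofinal in the open sets about it, and, assuming $X$ is not $\sigma$-compact, isolate a transversal of points $p_\beta$ to produce a Lindelöf space $Z$ with $X\times Z$ not Lindelöf --- contradicting productive Lindelöfness. You instead reduce to Alster's quoted theorem ($CH$ implies productively Lindelöf spaces of weight $\le\aleph_1$ are Alster) plus the remark that metrizable Alster spaces are $\sigma$-compact; both of your supporting steps check out, including the preservation of ``productively Lindelöf'' under continuous surjections via $f\times\mathrm{id}_Y$ (a step the paper also uses). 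The trade-off: your route is shorter but leans on Alster's theorem, which is the harder result and absorbs all the set-theoretic work, whereas the paper's direct construction is self-contained modulo $CH$ and has the additional payoff, noted explicitly in Section 3, that the very same argument produces a Michael space under $CH$ (Theorem \ref{thm12}) --- information your reduction does not yield. The author also presents the direct proof precisely because its method is ``exemplary,'' i.e., the construction technique is itself part of the point of the survey.
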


This was in effect proved in \cite{Michael1971}, but specifically
stated and proved in \cite{Alster1987}. The method of proof is again
exemplary:

\begin{proof}
  Let $X$ be a separable metrizable image of a productively Lindelöf
  $P$. Then $X$ is productively Lindelöf. (Separable metrizable is not
  needed for this.)
  Embed $X$ in $[0,1]^{\aleph_0}$. $[0,1]^{\aleph_0}$ has a countable
  base, so by $CH$ we can take open subsets $\{ U_\alpha \}_{\alpha <
    \omega_1}$ of $[0,1]^{\aleph_0}$ such that every open set about $Y
  = [0,1]^{\aleph_0} - X$ includes some $U_\alpha$. By taking
  countable intersections, we can find a decreasing sequence
  $\{G_\beta\}_{\beta < \omega_1}$ of $G_\delta$'s about $Y$, such
  that every open set about $Y$ includes some $G_\beta$. If $X$ is not
  $\sigma$-compact, we can assume the $G_\beta$'s are strictly
  descending. Pick $p_\beta \in (G_{\beta+1} - G_\beta) \cap X$. Put a
  topology on $Z = Y \cup \{ p_\beta : \beta < \omega_1\}$ by
  strengthening the subspace topology to make each $\{ p_\beta \}$
  open.  Then $Z$ is Lindelöf, but $X \times Z$ is not, since $\{
  \langle p_\beta, p_\beta \rangle : \beta < \omega_1 \}$ is closed
  discrete.
\end{proof}

Although $T_3$ (and hence paracompactness) is a natural assumption in the context of wondering whether Lindel\"{o}f spaces are $D$-spaces, given the lack of counterexamples even the following very recent result is noteworthy:

\begin{thm}[\cite{SZ}]
$V=L$ implies there is a hereditarily Lindel\"{o}f $T_1$ space of size $\aleph_1$ which is not $D$.
\end{thm}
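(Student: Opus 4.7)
The plan is to use $\diamondsuit_{\omega_1}$, which holds in $V=L$, to build by transfinite recursion a $T_1$ topology $\mathcal{T}$ on $X = \omega_1$ together with a designated open neighborhood assignment $f : X \to \mathcal{T}$ that will witness the failure of $D$. The guiding observation is that a hereditarily Lindelöf space forces every closed discrete subspace to be countable, so refuting the $D$-property via $f$ reduces to showing that $\bigcup_{\beta \in D} f(\beta) \neq X$ for each \emph{countable} closed discrete $D \subseteq X$. Since $\diamondsuit$ predicts every such candidate at stationarily many stages, the construction can defeat each one in turn.

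At stage $\alpha < \omega_1$ I would specify a countable neighborhood base at $\alpha$ and set $f(\alpha)$ to be its designated first element. Decode the diamond guess $S_\alpha \subseteq \alpha$, via a fixed coding, into a candidate closed discrete subset $D_\alpha \subseteq \alpha$ together with the already-committed values of $f$ on $D_\alpha$, and a candidate cofinal right-separated subset $R_\alpha \subseteq \alpha$. The neighborhood base at $\alpha$ is then chosen to carry out three tasks simultaneously: (i) each earlier $\beta < \alpha$ is excluded by some basic neighborhood of $\alpha$, yielding $T_1$ in the limit; (ii) if $R_\alpha$ is cofinal in $\alpha$, a basic neighborhood of $\alpha$ meets $R_\alpha$ cofinally, destroying $R_\alpha$ as an uncountable right-separated subspace and thereby securing hereditary Lindelöfness via the standard right-separation characterisation; (iii) using a fixed stationary ``reserve'' $W \subseteq \omega_1$ whose points have been kept outside $\bigcup_{\gamma \notin W} f(\gamma)$ throughout the construction by choosing every $f(\gamma)$ with $\gamma \notin W$ to be disjoint from $W$, some point of $W \cap \alpha$ is reserved as a witness that $D_\alpha$ does not cover (with a separate arrangement ensuring that no candidate $D$ can meet $W$, e.g.\ by making every $W$-point an accumulation point of too many other $W$-points to be isolated in any closed discrete set).

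The main obstacle is the tension between (ii) and (iii): asking $\alpha$ to accumulate to $R_\alpha$ naturally enlarges the neighborhoods of $\alpha$, which makes it harder to keep the reserve $W$ disjoint from $\bigcup f[D]$. In the $T_1$-only setting this tension is navigable because basic neighborhoods need only separate individual pairs of points, so one can thread them sparsely around $W$; under the stronger $T_3$ hypothesis each neighborhood would have to contain a closed neighborhood, and the iterated closures would eventually engulf the reserve --- an informal indication of why the corresponding $T_3$ question is much harder. The verification that $f$ has no closed discrete kernel is then a standard $\diamondsuit$-reflection: for any hypothetical closed discrete $D$ with $\bigcup f[D] = X$, the set of $\alpha$ at which $S_\alpha$ correctly codes $D \cap \alpha$ together with $f \restriction (D \cap \alpha)$ is stationary, and at each such $\alpha$ a reserved point of $W$ is exhibited outside $\bigcup f[D]$, contradicting the covering. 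The accompanying verification that $(X, \mathcal{T})$ is hereditarily Lindelöf reduces, by the right-separation characterisation, to noting that any uncountable right-separated subspace of $X$ would be guessed on a stationary set by $\diamondsuit$ and destroyed by commitment (ii).
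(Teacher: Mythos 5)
The paper states this theorem only as a citation to \cite{SZ} and contains no proof of it, so I can only assess your argument on its own terms; as written it has two gaps that I believe are fatal.

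First, your mechanism for hereditary Lindel\"{o}fness attacks the wrong kind of separation. Hereditary Lindel\"{o}fness is equivalent to the nonexistence of an uncountable \emph{right}-separated subspace, i.e.\ an uncountable $R$ (which one may take to be right-separated in its increasing ordinal enumeration) in which every point has a neighborhood excluding all \emph{later} points of $R$. Your commitment (ii) makes the new point $\alpha$ accumulate to the guessed set $R_\alpha \subseteq \alpha$ from \emph{below}; that refutes left-separation --- the property relevant to hereditary separability --- and is perfectly compatible with $R$ being right-separated. To destroy right-separation you must arrange that some $\beta \in R$ lies in the closure of $R \setminus (\beta+1)$, which forces you to decide, when $\alpha$ is created, whether $\alpha$ enters the previously declared basic neighborhoods of earlier points. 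Your recursion never does this: you finalize a countable neighborhood base at $\alpha$ at stage $\alpha$ and only control which earlier points are excluded from it. If each point's basic neighborhoods are thereby fixed as subsets of $\alpha+1$, the whole space is right-separated in the ordinal order and cannot be hereditarily Lindel\"{o}f of size $\aleph_1$; if they are meant to contain later points, the recursion as described does not determine them. (The same omission leaves $T_1$ unverified in one direction: you must also exclude $\alpha$ from some neighborhood of each $\beta < \alpha$.)

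Second, the reserve mechanism in (iii) rests on the claim that one can make ``every $W$-point an accumulation point of too many other $W$-points to be isolated in any closed discrete set.'' In a $T_1$ space every singleton is closed and discrete, and more generally a point can be an accumulation point of $W$ and still belong to a closed discrete set, so closed discrete sets meeting $W$ cannot be legislated away. Consequently $\bigcup f[D]$ contains the sets $f(\delta)$ for $\delta \in D \cap W$, which your construction does not require to avoid $W$, and the reserved witness in $W$ may well be covered. Controlling $f$ on $W$ simultaneously with the (correctly formulated) hereditary-Lindel\"{o}f requirements is exactly the interaction you flag as a ``tension'' but do not resolve, and it is where the real content of the theorem lies.
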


\section{Michael spaces}

In \cite{Michael1963}, Ernie Michael proved:

\begin{thm}\label{thm12}
  $CH$ implies there is a Lindelöf space $X$ such that $X \times
  \mb{P}$ (the space of irrationals) is not Lindelöf.
\end{thm}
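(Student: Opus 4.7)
The plan is to use CH to construct a Luzin set in the irrationals and then build $X$ by isolating those points while leaving a dense countable set with its usual neighborhoods. Concretely, using CH, I would enumerate the closed nowhere dense subsets of $\mathbb{R}$ as $\{F_\alpha : \alpha < \omega_1\}$ and recursively pick $x_\alpha \in \mathbb{P} \setminus (\bigcup_{\beta < \alpha} F_\beta \cup \{x_\beta : \beta < \alpha\})$, which is nonempty since the union removed is meager in $\mathbb{R}$. This produces an uncountable set $L \subseteq \mathbb{P}$ with the Luzin property that $L \cap M$ is countable for every meager $M \subseteq \mathbb{R}$.

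Next I would let $X = L \cup \mathbb{Q}$, topologized by declaring each point of $L$ to be isolated while each rational retains its usual Euclidean neighborhoods (restricted to $X$). To verify that $X$ is Lindelöf, take any open cover $\mc{U}$ of $X$. Since $\mathbb{Q}$ is countable, only countably many members of $\mc{U}$ are needed to cover $\mathbb{Q}$, and their union contains a usual open set $W \subseteq \mathbb{R}$ with $\mathbb{Q} \subseteq W$. Then $\mathbb{R} \setminus W$ is closed and disjoint from the dense set $\mathbb{Q}$, hence nowhere dense and therefore meager, so by the Luzin property $L \setminus W$ is countable. Covering those countably many isolated points individually yields a countable subcover.

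Then I would show $X \times \mathbb{P}$ is not Lindelöf by exhibiting the set $D = \{(x,x) : x \in L\}$ as a closed discrete subspace of cardinality $\aleph_1$. Discreteness is immediate: for $x \in L$ the set $\{x\} \times V$ (with $V$ any Euclidean neighborhood of $x$ in $\mathbb{P}$) is open in $X \times \mathbb{P}$ and meets $D$ only at $(x,x)$. For closedness, take any $(a,b) \in X \times \mathbb{P}$ with $(a,b) \notin D$; in every case $a \neq b$ (either $a \in \mathbb{Q}$ while $b \in \mathbb{P}$, or both are in $\mathbb{P}$ but distinct, noting that the case $a = b \in \mathbb{P} \setminus L$ cannot occur because $\mathbb{P} \setminus L \cap X = \emptyset$), so disjoint Euclidean neighborhoods $U \ni a$ and $V \ni b$ in $\mathbb{R}$ give a product neighborhood of $(a,b)$ in $X \times \mathbb{P}$ disjoint from $D$.

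The main obstacle to watch is the closedness of the diagonal: it is tempting to take $X$ to be all of $\mathbb{R}$ with only $L$ isolated, but then points $(p,p)$ with $p \in \mathbb{P} \setminus L$ cannot be separated from $D$ because $L$ is dense. Trimming $X$ down to $L \cup \mathbb{Q}$ exactly removes those problematic points while keeping enough of a ``rational skeleton'' for the Lindelöf argument to go through via the Luzin property. The argument uses CH only in building $L$; everything else is straightforward checking of neighborhoods.
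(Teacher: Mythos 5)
Your proof is correct and is essentially the argument the paper intends: the paper obtains this theorem from the proof of Theorem \ref{thm11} applied to $\mb{P}$, which under $CH$ produces an uncountable set of irrationals concentrated about $\mb{Q}$ (your Luzin set is a special case of such a set), isolates its points alongside $\mb{Q}$ with its usual neighborhoods, and uses the diagonal as the uncountable closed discrete witness. This is Michael's original construction, so no genuinely different route is involved.
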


The question of whether such a space can be constructed without
additional axioms has become known as \emph{Michael's problem}, and
such a space is called a \emph{Michael space}. Notice that the proof
of Theorem \ref{thm11} is also a proof of Theorem \ref{thm12}:
$\mb{P}$ is not $\sigma$-compact, so under $CH$, it cannot be
productively Lindelöf.

\begin{prob}
  Is there a Michael space?
\end{prob}

Michael spaces have been constructed under a variety of assumptions
about small cardinals --- see e.g. \cite{Alster1990},
\cite{Lawrence1990} and \cite{Moore1999}. These yield:

\begin{thm}
  $\mf{b} = \aleph_1$ or $\mf{d} = cov(\mc{M})$ imply there is a
  Michael space. ($\mc{M}$ is the meagre ideal for $\mb{P}$.)
\end{thm}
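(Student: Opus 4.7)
The plan follows the template made visible in the proof of Theorem \ref{thm11}: construct an uncountable $A\subseteq\mb{P}$ that is \emph{concentrated on $\mb{Q}$} --- i.e.\ every open $U\subseteq\mb{R}$ with $\mb{Q}\subseteq U$ satisfies $|A\setminus U|\le\aleph_0$ --- and then let $X=A\cup\mb{Q}$ carry the refinement of the subspace topology in which every point of $A$ is isolated while each rational keeps its usual Euclidean neighborhoods. Then $X$ is Lindelöf: given an open cover $\mc{U}$, cover the countable set $\mb{Q}$ by countably many members of $\mc{U}$ whose union is an open neighborhood $V\subseteq\mb{R}$ of $\mb{Q}$; by concentration $|A\setminus V|\le\aleph_0$, so cover the leftover points by singletons from $\mc{U}$. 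On the other hand, $\{(a,a):a\in A\}$ is an uncountable closed discrete subset of $X\times\mb{P}$ (each $(a,a)$ is isolated by $\{a\}\times\mb{P}$; closedness reduces to a case split on whether the first coordinate lies in $\mb{Q}$ or in $A$, using that $\mb{R}$ is Hausdorff), so $X\times\mb{P}$ is not Lindelöf. This reduces the theorem to producing an uncountable $A\subseteq\mb{P}$ concentrated on $\mb{Q}$ under each hypothesis.

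The next step is to recast concentration combinatorially. Fix a homeomorphism $\mb{P}\cong\omega^\omega$. A closed set $F\subseteq\mb{R}$ disjoint from $\mb{Q}$ is contained in $\mb{P}$ and decomposes as $\bigcup_n F\cap[-n,n]$, so $F$ is $\sigma$-compact in $\mb{P}$; conversely, every $\sigma$-compact $K\subseteq\mb{P}$ is a countable union of such closed sets. Under the homeomorphism the $\sigma$-compact subsets of $\omega^\omega$ are exactly the $\le^*$-bounded sets $B_f=\{g:g\le^* f\}$, so concentration of $A$ on $\mb{Q}$ is equivalent to $|A\cap B_f|\le\aleph_0$ for every $f\in\omega^\omega$.

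For $\mf{b}=\aleph_1$, I would build a $<^*$-increasing scale $\{f_\alpha:\alpha<\omega_1\}$ by transfinite recursion. At stage $\alpha<\omega_1$ the set $\{f_\beta:\beta<\alpha\}$ has size $<\mf{b}$ and is therefore $\le^*$-bounded; choose $f_\alpha$ strictly above such a bound and also above the $\alpha$-th term of a fixed unbounded family of size $\aleph_1$, which ensures both monotonicity and global unboundedness. With $A=\{f_\alpha:\alpha<\omega_1\}$, for every $f$ the set $\{\alpha:f_\alpha\le^* f\}$ is a proper initial segment of $\omega_1$, hence countable, so $A$ is concentrated.

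For $\mf{d}=\mathrm{cov}(\mc{M})=\kappa$, I would fix a dominating family $\{g_\alpha:\alpha<\kappa\}$, note that each $B_{g_\alpha}$ is meager in $\mb{P}$, and recursively choose $a_\alpha\in\mb{P}\setminus\bigcup_{\beta\le\alpha}B_{g_\beta}$; this selection is possible because fewer than $\mathrm{cov}(\mc{M})=\kappa$ meager sets cannot exhaust $\mb{P}$. Given any $f$, pick $\alpha$ with $f\le^* g_\alpha$; then $A\cap B_f\subseteq A\cap B_{g_\alpha}\subseteq\{a_\beta:\beta<\alpha\}$. The main obstacle is this last inclusion when $\kappa>\aleph_1$: the set $\{a_\beta:\beta<\alpha\}$ has cardinality $|\alpha|$, which need not be countable, so the naïve recursion gives only $|A\cap B_f|<\kappa$ rather than pointwise-countable concentration. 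Closing this gap --- either by interleaving the recursion so that each $B_f$ is ``caught'' together with all but countably many of its earlier witnesses, or by weakening the target property to something still sufficient to force Lindelöfness of $X$ --- is the technical heart of the constructions in the cited papers.
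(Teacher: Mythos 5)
Your first case is essentially correct and is the standard argument: the translation of ``$A$ is concentrated on $\mb{Q}$'' into ``$|A\cap B_f|\le\aleph_0$ for every $f$'' is sound, a $<^*$-increasing unbounded $\omega_1$-scale obtained from $\mf{b}=\aleph_1$ gives such an $A$, and $A\cup\mb{Q}$ with the points of $A$ isolated is then a (regular) Lindelöf space whose product with $\mb{P}$ contains the uncountable closed discrete diagonal copy of $A$. This is exactly van Douwen's classical construction and is what the cited sources do for that disjunct (the paper itself only cites \cite{Alster1990}, \cite{Lawrence1990}, \cite{Moore1999} and gives no proof). The second case, however, contains a genuine gap which you correctly sense but misdiagnose as technical. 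An uncountable $A\subseteq\omega^\omega$ with $|A\cap B_f|\le\aleph_0$ for all $f$ exists \emph{if and only if} $\mf{b}=\aleph_1$: any subset of such an $A$ of size $\aleph_1$ must be $\le^*$-unbounded, since a bound $f$ would put it inside the single $\sigma$-compact set $B_f$. So in precisely those models where the hypothesis $\mf{d}=\mathrm{cov}(\mc{M})$ is doing work beyond the first disjunct --- e.g.\ under $MA+\neg CH$, where $\mf{b}=\mf{d}=\mathrm{cov}(\mc{M})=2^{\aleph_0}>\aleph_1$ --- there is provably no uncountable concentrated set, and no reorganization of your recursion can produce one.

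Nor can the target be weakened while keeping your space: for $X=A\cup\mb{Q}$ with every point of $A$ isolated, Lindelöfness of $X$ is \emph{equivalent} to concentration of $A$ on $\mb{Q}$, as witnessed by the cover consisting of the trace on $X$ of one Euclidean open set around $\mb{Q}$ together with all singletons from $A$. Hence the second disjunct forces a genuinely different construction, which is what Alster, Lawrence and Moore supply: roughly, one starts from a dominating family $\{f_\alpha:\alpha<\kappa\}$ with $\kappa=\mf{d}=\mathrm{cov}(\mc{M})$ and builds a space on $\mb{Q}$ together with the $f_\alpha$'s in which the $f_\alpha$'s are \emph{not} isolated but carry finer neighborhoods tied to the well-ordering, so that Lindelöfness requires only that the trace of the family on each $\sigma$-compact $B_f$ be bounded in $\kappa$ rather than countable; it is in arranging the family so that these traces are small that $\mathrm{cov}(\mc{M})=\mf{d}$ enters, via your own observation that each $B_f$ is meagre. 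As written, your proposal proves the $\mf{b}=\aleph_1$ half of the theorem and only the easy reductions for the other half.
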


A unified approach to the various constructions is given in \cite{AAJT}.
There is a close connection between Michael's problem and

\begin{prob}\label{prob6}
  Is every productively Lindelöf metrizable space $\sigma$-compact?
\end{prob}

We have already seen that the answer is positive under $CH$ (Theorem
\ref{thm11}), but if we add on definability conditions for the space,
we can reduce that hypothesis to the existence of a Michael space:

\begin{thm}[\cite{Tall2009a}]
  Every productively Lindelöf analytic metrizable space is
  $\sigma$-compact if and only if there is a Michael space. Assuming
  the axiom of Projective Determinacy ($PD$) \emph{analytic} can be
  improved to \emph{projective}.
\end{thm}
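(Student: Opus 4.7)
The plan is to pivot on the Hurewicz dichotomy for analytic metrizable spaces: every such space is either $\sigma$-compact or contains a closed copy of $\mathbb{P}$. Both directions then follow almost formally once one has this dichotomy, and the projective strengthening reduces to citing the analogous dichotomy for projective sets under $PD$.

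For the ``if'' direction, suppose a Michael space $M$ exists, and let $X$ be a productively Lindelöf analytic metrizable space. If $X$ were not $\sigma$-compact, Hurewicz's theorem would give a closed embedding $\mathbb{P} \hookrightarrow X$. Since $M$ is Lindelöf and $X$ is productively Lindelöf, $M \times X$ is Lindelöf, and hence so is its closed subspace $M \times \mathbb{P}$ --- contradicting that $M$ is a Michael space. So $X$ must be $\sigma$-compact.

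For the ``only if'' direction, observe that $\mathbb{P}$ is itself analytic and metrizable but not $\sigma$-compact, so under the hypothesis $\mathbb{P}$ cannot be productively Lindelöf. Thus there exists a Lindelöf space $Y$ with $\mathbb{P} \times Y$ non-Lindelöf, and by definition $Y$ is a Michael space.

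For the $PD$ extension, the same two arguments go through verbatim once we know that every projective metrizable space is either $\sigma$-compact or contains a closed copy of $\mathbb{P}$. This Hurewicz-type dichotomy for analytic sets is known to lift to the projective hierarchy under Projective Determinacy --- the regularity properties of analytic sets propagate through the projective classes once sufficient determinacy is assumed --- so the argument carries through with ``analytic'' replaced by ``projective.'' The main obstacle, and the only non-routine ingredient, is invoking the correct form of the projective Hurewicz dichotomy under $PD$; once that descriptive set-theoretic input is in hand, the topological content of the proof is the short Lindelöf-product argument above.
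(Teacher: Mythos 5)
Your proposal is correct and follows exactly the route the paper indicates: it reduces both directions to the Hurewicz dichotomy for analytic (resp., under $PD$, projective) Lindelöf metrizable spaces, with the ``if'' direction using that a closed copy of $\mathbb{P}$ in $X$ yields a closed non-Lindelöf subspace $M\times\mathbb{P}$ of the Lindelöf product $M\times X$, and the ``only if'' direction noting that $\mathbb{P}$ itself is analytic, metrizable, and not $\sigma$-compact. No substantive differences from the paper's approach.
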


  The key is to apply the \emph{Hurewicz Dichotomy}.

  \begin{defn}
    A class $\mc{K}$ of space satisfies the \emph{Hurewicz Dichotomy} if
    every member of $\mc{K}$ is either $\sigma$-compact or includes a
    closed copy of $\mb{P}$.
  \end{defn}

Analytic Lindel\"{o}f metrizable spaces satisfy the Hurewicz Dichotomy
\cite{Hurewicz1925} (or see \cite{Kechris1995}); projective Lindel\"{o}f metrizable
spaces do under $PD$ \cite{KLW1987} --- see discussion in
\cite{Tall2009a}. How far the Hurewicz Dichotomy can be extended,
either in the metrizable or non-metrizable context is unclear. See
\cite{TallNote} and \cite{TallMay} for discussion in terms of
Problem \ref{prob6} and

\begin{prob}
  Is every productively Lindelöf space Menger?
\end{prob}

Notice that by Lemma \ref{lemma10} this is equivalent to:

\begin{prob}
  Is every productively Lindelöf metrizable space Menger?
\end{prob}

We should mention the well-known facts that $\sigma$-compact spaces
are Menger, but not necessarily the converse \cite{ChaberPol}, \cite{Ts}, and that $\mb{P}$ is not
Menger.

After this note was first submitted, D. Repov\v{s} and L. Zdomskyy \cite{RZ} succeeded in completely removing the definability condition, but at a cost, obtaining:

\begin{thm}
There is a Michael space if and only if productively Lindel\"{o}f metrizable spaces are Menger.
\end{thm}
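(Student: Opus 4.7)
The $(\Leftarrow)$ direction is immediate: $\mb{P}$ is metrizable but not Menger (as noted just after Problem \ref{prob6}), so under the hypothesis $\mb{P}$ cannot be productively Lindelöf, and any Lindelöf $Y$ witnessing this---i.e., with $Y \times \mb{P}$ not Lindelöf---is a Michael space.

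For the $(\Rightarrow)$ direction, fix a Michael space $M$ and a productively Lindelöf metrizable $X$; I would argue by contrapositive. Suppose $X$ is not Menger. By Hurewicz's classical characterization of the Menger property for Lindelöf metrizable spaces, there is then a continuous $f : X \to \omega^\omega$ whose image $D := f(X)$ is dominating in $(\omega^\omega, \leq^*)$. The property of being productively Lindelöf is preserved under continuous surjections (as observed in the proof of Theorem \ref{thm11}), so $D$ is productively Lindelöf, and therefore $D \times M$ is Lindelöf.

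The heart of the argument is then to derive a contradiction by showing $D \times M$ is in fact \emph{not} Lindelöf. The natural lemma to establish is: \emph{if $M$ is a Michael space and $D \subseteq \omega^\omega$ is dominating, then $M \times D$ is not Lindelöf.} Granting this, the previous paragraph yields the desired contradiction. To prove the lemma, I would start from a basic-open cover $\mc{U}$ of $M \times \mb{P}$ with no countable subcover---basic opens having the form $V \times [\sigma]$ for $V$ open in $M$ and $\sigma \in \omega^{<\omega}$---and attempt to show that its restriction to $M \times D$ still has no countable subcover, by transferring any hypothetical countable subcover $\{V_n \times [\sigma_n]\}$ of $M \times D$ to one of $M \times \mb{P}$ using that each $y \in \mb{P}$ is eventually dominated by some $d \in D$.

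The main obstacle is precisely that transfer step: a dominating witness $d \geq^* y$ with $(m,d) \in V_n \times [\sigma_n]$ does not automatically place $y$ into $[\sigma_n]$, since $\leq^*$-domination is blind to the finite-prefix information determining $[\sigma_n]$. Overcoming this requires either a clever initial choice of $\mc{U}$ with $\sigma_n$'s chosen to respect the $\leq^*$ order, or a supplementary argument exploiting further structure of productively Lindelöf spaces beyond ``dominating continuous image.'' This is the technical contribution of \cite{RZ}, replacing the analytic Hurewicz Dichotomy that powered the earlier, definability-restricted theorem identifying productively Lindelöf analytic metrizable spaces with $\sigma$-compact ones.
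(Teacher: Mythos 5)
First, a framing remark: the paper does not prove this theorem---it is quoted verbatim from \cite{RZ}---so there is no internal proof to compare against, and your attempt has to be judged on its own. Your $(\Leftarrow)$ direction is complete and correct. Your reduction of the $(\Rightarrow)$ direction is also sound: passing from a non-Menger productively Lindelöf metrizable $X$ to a continuous image $D\subseteq\mb{P}$ that is dominating (Hurewicz's characterization, or equivalently the projectively-Menger machinery of Section 2) and still productively Lindelöf (continuous images preserve this, as in the proof of Theorem \ref{thm11}) correctly isolates the key lemma: \emph{if $M$ is a Michael space and $D\subseteq\mb{P}$ is dominating, then $M\times D$ is not Lindelöf.} The genuine gap is that you do not prove this lemma, and the method you sketch for it---restricting a bad cover of $M\times\mb{P}$ to $M\times D$ and trying to push a countable subcover back---is the wrong tool. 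As you yourself observe, $\le^*$-domination carries no information about the initial segments that determine basic open sets, and no ``clever initial choice of $\mc{U}$'' will repair that.

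The lemma is nonetheless true, and the correct mechanism is compactness rather than cover transfer. Argue contrapositively: suppose $D$ is dominating and $M\times D$ is Lindelöf. Let $E=\{(d,y)\in D\times\mb{P}: y(n)\le d(n)\text{ for all }n\}$. Then $E$ is closed in $D\times\mb{P}$, its fiber over $d$ is the compact set $\prod_{n}(d(n)+1)$, and a short tube-lemma computation shows the projection $E\to D$ is perfect. Hence $M\times E\to M\times D$ is perfect, and since perfect preimages of Lindelöf spaces are Lindelöf, $M\times E$ is Lindelöf. Because $D$ is dominating, every $y\in\mb{P}$ is obtained from some element of $\bigcup_{d\in D}\prod_n(d(n)+1)$ by changing finitely many initial values; thus $M\times\mb{P}$ is a countable union (one piece for each finite correction $s\in\omega^{<\omega}$) of continuous images of $M\times E$, hence Lindelöf, so $M$ is not a Michael space. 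Note that this uses exactly what you have in hand---that $M\times D$ is Lindelöf---and nothing more about $D$. With this lemma supplied, your outline becomes a complete proof of the theorem of \cite{RZ}.
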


The question about Hurewicz dichotomy is still of interest, however, in terms of obtaining $\sigma$-compactness.  In addition to \cite{TallProblems}, see \cite{RZ}, wherein the following problem is raised:
\begin{prob}
Suppose there is a Michael space.  Is every co-analytic productively Lindel\"{o}f metrizable space $\sigma$-compact?
\end{prob}

\begin{defn}
A $T_3$ Lindel\"{o}f space is \emph{Hurewicz} if whenever $Z$ is a \v{C}ech-complete space including $X$, there is a $\sigma$-compact $Y$ such that $X \subseteq Y \subseteq Z$.
\end{defn}

In \cite{AurichiTall} we proved:

\begin{thm}
$\mathfrak{d} = \aleph_1$ implies productively Lindel\"{o}f metrizable spaces are Hurewicz.
\end{thm}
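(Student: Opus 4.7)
The plan is to verify, by a reduction argument, the classical characterization of the Hurewicz property for separable metric spaces: a Lindel\"{o}f metrizable $X$ satisfies the definition above if and only if every continuous image of $X$ in $\omega^\omega$ is contained in a $\sigma$-compact subset of $\omega^\omega$, equivalently is bounded in the eventual-dominance ordering $\leq^*$. Since $X$ is Lindel\"{o}f and metrizable, hence separable, I would work throughout with separable metric spaces.

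First, using Lemma \ref{lemma10} together with the diagonal-product trick from the proof of Theorem \ref{thm9}, the problem reduces to showing that every continuous image of $X$ in a separable metric space is Hurewicz. Continuous surjections preserve productive Lindel\"{o}fness, so such an image is again productively Lindel\"{o}f. Thus it suffices to prove the following, which is the heart of the matter: under $\mathfrak{d} = \aleph_1$, every productively Lindel\"{o}f subspace $Y$ of $\omega^\omega$ is $\leq^*$-bounded.

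I would prove this by contradiction. Suppose $Y \subseteq \omega^\omega$ is productively Lindel\"{o}f and $\leq^*$-unbounded. Let $\{g_\alpha : \alpha < \omega_1\}$ be a $\leq^*$-increasing dominating family provided by $\mathfrak{d} = \aleph_1$, and for each $\alpha < \omega_1$ pick a witness $y_\alpha \in Y$ with $y_\alpha \not\leq^* g_\alpha$. The goal is to construct a Lindel\"{o}f space $L$ (a Michael-type modification of $\omega^\omega$) and, via a copy $\tilde{y}_\alpha \in L$ of each $y_\alpha$, to exhibit $\{(y_\alpha, \tilde{y}_\alpha) : \alpha < \omega_1\}$ as a closed discrete subset of $Y \times L$; such an $\aleph_1$-sized closed discrete set contradicts the Lindel\"{o}fness of $Y \times L$ forced by productive Lindel\"{o}fness of $Y$.

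The main obstacle is the construction of $L$ and the verification of its properties, which is where $\mathfrak{d} = \aleph_1$ is crucially used. The construction generalizes the $\mathfrak{b} = \aleph_1$ construction of a Michael space, with the dominating family $\{g_\alpha\}$ playing the role of the scale. Lindel\"{o}fness of $L$ should follow from the fact that every $f \in \omega^\omega$ is eventually dominated by some $g_\alpha$, giving control over open covers; closedness of the diagonal in $Y \times L$ should follow from each $y_\alpha$ escaping $g_\alpha$ infinitely often. Balancing these two requirements --- the topology on $L$ being fine enough for the diagonal to be closed yet coarse enough for $L$ to be Lindel\"{o}f --- is the delicate point of the argument.
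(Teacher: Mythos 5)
Your overall strategy is sound and is essentially the one used in \cite{AurichiTall} (this survey itself states the theorem with a citation and gives no proof): reduce via the classical characterizations of the Hurewicz property for separable metrizable spaces to showing that every continuous image of $X$ in $\omega^\omega$ is $\leq^*$-bounded, note that such an image is again productively Lindel\"{o}f, and derive a contradiction from an unbounded productively Lindel\"{o}f $Y\subseteq\omega^\omega$ by a Michael-space-type construction. (The first reduction you mention, via Lemma \ref{lemma10} and the diagonal product, is unnecessary here since $X$ is already separable metrizable; what you are really invoking is the pair of classical facts that the \v{C}ech-complete/$\sigma$-compact squeeze definition is equivalent to the Hurewicz selection principle, which in turn is equivalent to boundedness of all continuous images in $\omega^\omega$. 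Both are standard, but you should cite them explicitly.)

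The genuine gap is that the space $L$ --- which you yourself identify as ``the heart of the matter'' and ``the delicate point'' --- is never constructed, so the proof is not complete. The gap is fillable, and here is how: take the compactification $(\omega+1)^\omega$ of $\omega^\omega$, let $F=(\omega+1)^\omega\setminus\omega^\omega$ (a $\sigma$-compact set disjoint from $Y$), and let $L=F\cup\{y_\alpha:\alpha<\omega_1\}$, where $F$ carries its subspace topology and each $y_\alpha$ is declared isolated. Lindel\"{o}fness of $L$: any open set of $L$ containing $F$ extends to an open $V\subseteq(\omega+1)^\omega$ with compact complement $C\subseteq\omega^\omega$; $C$ is $\leq$-bounded by some $g$, and $g\leq^* g_\beta$ for some $\beta$, so for $\alpha>\beta$ the relation $y_\alpha\in C$ would give $y_\alpha\leq^* g_\alpha$, contradicting the choice of $y_\alpha$; hence all but countably many $y_\alpha$ lie in $V$, and the rest of $F$'s $\sigma$-compactness does the remaining work. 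Closed discreteness of $\{(y_\alpha,y_\alpha)\}$ in $Y\times L$: a point $(p,q)$ with $q=y_\alpha$ is handled by the isolated neighborhood $Y\times\{y_\alpha\}$, while for $q\in F$ we have $p\in\omega^\omega$ and $q\notin\omega^\omega$, so $p\neq q$ and a Hausdorff separation in $(\omega+1)^\omega$ gives a box missing the set. Note that this corrects the division of labor in your sketch: the escaping property $y_\alpha\not\leq^* g_\alpha$ together with domination is what yields Lindel\"{o}fness of $L$ (concentration of the $y_\alpha$ on $F$), whereas closedness of the diagonal comes for free from $F\cap\omega^\omega=\emptyset$ and Hausdorff separation, not from the escaping property. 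Finally, one must observe that uncountably many distinct $y_\alpha$ occur (any single $y$ can witness $y\not\leq^* g_\alpha$ only for a bounded set of $\alpha$, since $\{g_\alpha\}$ is $\leq^*$-increasing and dominating), so the closed discrete set really is uncountable.
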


We thus have that the three progressively weaker hypotheses: \textit{CH}, $\mathfrak{d} = \aleph_1$, \textit{there is a Michael space}, imply the respectively weaker conclusions: \textit{$\sigma$-compact}, \textit{Hurewicz}, \textit{Menger}, for productively Lindel\"{o}f metrizable spaces.

\begin{prob}
Are the stronger hypotheses necessary in order to obtain the stronger conclusions?
\end{prob}

\section{The cardinality of Lindelöf spaces with points $G_\delta$}

A. V. Arhangel'ski\u\i{} \cite{Arhangelskii1969} solved a famous problem
of P. S. Alexandroff by proving that:

\begin{lemma}
  Lindelöf first countable $T_2$ spaces have cardinality $\le 2^{\aleph_0}$
\end{lemma}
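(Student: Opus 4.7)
The plan is to carry out Arhangel'ski\u\i's classical closing-off construction. First, for each $x \in X$, I would fix a countable open neighborhood base $\{U_n(x) : n < \omega\}$ at $x$. Then, by transfinite recursion on $\alpha < \omega_1$, I would build an increasing chain of closed subsets $F_\alpha \subseteq X$ with $|F_\alpha| \le 2^{\aleph_0}$ satisfying the following closing-off condition at successor stages: for every countable family $\mathcal{U} \subseteq \{U_n(x) : x \in F_\alpha,\ n < \omega\}$ with $\bigcup \mathcal{U} \ne X$, the set $F_{\alpha+1}$ contains a point of $X \setminus \bigcup \mathcal{U}$. At limit stages I would simply take $F_\alpha = \overline{\bigcup_{\beta<\alpha} F_\beta}$.

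To keep $|F_\alpha| \le 2^{\aleph_0}$ at successor stages, I would use that there are only $(2^{\aleph_0})^{\aleph_0} = 2^{\aleph_0}$ countable subfamilies to handle, and that in a first countable $T_2$ space the closure of a set of cardinality at most $2^{\aleph_0}$ still has cardinality at most $2^{\aleph_0}$: each closure point is the (unique, by Hausdorff) limit of some sequence from the set, and there are only $2^{\aleph_0}$ such sequences. The same bookkeeping controls cardinality at limit stages.

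Writing $F = \bigcup_{\alpha < \omega_1} F_\alpha$, one obtains $|F| \le \aleph_1 \cdot 2^{\aleph_0} = 2^{\aleph_0}$. I would then verify that $F$ is closed in $X$ by combining first countability (closure coincides with sequential closure) with the regularity of $\omega_1$: a convergent sequence in $F$ has all its terms in some single $F_\alpha$ with $\alpha < \omega_1$, so its limit lies in $\overline{F_\alpha} = F_\alpha \subseteq F$.

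The crucial remaining step is to show $F = X$, and this is where both Lindelöfness and $T_2$ are decisively used. Assume for contradiction that some $y \in X \setminus F$ exists. For each $x \in F$, the Hausdorff axiom supplies $n(x) < \omega$ with $y \notin U_{n(x)}(x)$. The family $\{U_{n(x)}(x) : x \in F\}$ then covers the closed (and therefore Lindelöf) subspace $F$, so it admits a countable subcover $\{U_{n(x_k)}(x_k) : k < \omega\}$. By regularity of $\omega_1$ all the $x_k$ lie in a single $F_{\alpha_0}$. This countable family misses $y$, so by the closing-off condition at stage $\alpha_0 + 1$ there is some $p \in F_{\alpha_0+1} \subseteq F$ outside $\bigcup_k U_{n(x_k)}(x_k)$; but that union covers $F \ni p$, a contradiction. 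I expect the main obstacle to be the cardinality bookkeeping at successor stages, where one must simultaneously neutralize all $2^{\aleph_0}$ many countable potential bad families and then take a closure, leaning on the unique-sequential-limits consequence of first countability plus Hausdorff to keep everything bounded by $2^{\aleph_0}$.
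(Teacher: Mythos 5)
Your argument is correct: it is Arhangel'ski\u\i's original closing-off construction, and all the delicate points are handled properly --- in particular you correctly isolate where $T_2$ (as opposed to $T_1$) is genuinely needed, namely in bounding $|\overline{A}|$ by $|A|^{\aleph_0}$ via uniqueness of sequential limits, which is exactly why the $T_1$ version remains open (as the paper notes in Section 4). The paper itself gives no proof of this lemma; it only points to an elementary-submodel proof in chapter 24 of Just--Weese. That version is the same argument in different packaging: one takes an elementary submodel $M \prec H(\theta)$ of size $2^{\aleph_0}$ closed under $\omega$-sequences with $X$ and the neighborhood bases in $M$, and shows $X \subseteq M$; the closure under $\omega$-sequences plays the role of your limit stages and closure operations, and the elementarity of $M$ replaces your explicit enumeration of the $2^{\aleph_0}$ countable subfamilies to be neutralized at successor stages. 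The submodel version buys brevity and avoids the cardinality bookkeeping you rightly flag as the main labor; your transfinite recursion buys self-containedness and makes visible exactly which hypotheses are used where.
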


A proof using elementary submodels can be found in chapter 24 of
\cite{Just1997}. Since in $T_1$ first countable spaces points are
$G_\delta$, Arhangel'ski\u\i{} then asked whether the continuum also
bounded the cardinality of Lindelöf $T_2$ spaces with points
$G_\delta$. The consistency of a positive answer remains open. I have
surveyed this question twice before: \cite{Tall1995}, \cite{TallProblems},
so after a minimum amount of background I will turn to new material. A
consistent negative answer was provided by \cite{Shelah1996}; an
easier and more general proof was provided by \cite{Gorelic1993}:

\begin{thm}
  It is consistent with $ZFC+CH$ that there is a Lindelöf
  zero-dimensional space with points $G_\delta$ of size
  $2^{\aleph_1}$.
\end{thm}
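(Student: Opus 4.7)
The plan is to construct the space in a generic extension of a ground model of $GCH$. Starting in $V \models GCH$, I would force with a carefully chosen countably closed poset $\mathbb{P}$ of size $\aleph_2$ that simultaneously (i) adds $\aleph_2$-many ``generic'' functions $f_\alpha : \omega_1 \to 2$ for $\alpha < \aleph_2$ and (ii) comes with a topology on the set $X = \{ f_\alpha : \alpha < \aleph_2 \}$ built out of the countable conditions of $\mathbb{P}$. Because $\mathbb{P}$ is countably closed, no new reals are added, so $CH$ is preserved in $V[G]$; at the same time $\mathbb{P}$ makes $2^{\aleph_1} = \aleph_2$, yielding $|X| = 2^{\aleph_1}$ in the extension.

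First I would specify the topology. Basic clopen sets are of the form $[\sigma] = \{ f_\alpha : \sigma \subseteq f_\alpha \}$, where $\sigma : s \to 2$ runs over partial functions with countable domain $s \in [\omega_1]^{\leq \omega}$. Zero-dimensionality is then immediate because each $[\sigma]$ is clopen (its complement is the union of the $[\sigma']$ disagreeing with $\sigma$ on some coordinate of $s$). Each point $f_\alpha$ is the intersection of the countable family $\{ [f_\alpha \restriction s_n] : n < \omega \}$ for any countable cofinal sequence $\langle s_n \rangle$ of subsets of $\omega_1$ — provided we arrange that the generic enumeration separates distinct $f_\alpha$'s on countably many coordinates. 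So points are $G_\delta$.

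Next I would show $X$ is Lindel\"of. The strategy is to absorb an arbitrary open cover $\mathcal{U}$ into a $\mathbb{P}$-name $\dot{\mathcal{U}}$, and use countable closure together with a $\Delta$-system / reflection argument on the countable ``supports'' of the conditions that decide which $f_\alpha$ is covered by which basic piece. Concretely, one builds a decreasing $\omega_1$-sequence of conditions that decides cover-membership for more and more points; countable closure ensures such a sequence has a lower bound at each step, and $CH$ in the extension allows one to trim the $\aleph_1$-many ``reflected'' covering basic sets down to countably many by a pressing-down or bookkeeping argument.

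The principal obstacle is designing $\mathbb{P}$ so that the Lindel\"of property genuinely reflects down: countable closure alone buys preservation of $\omega_1$ and $CH$ but not reflection of open covers. Gorelic's key idea is to make each condition carry not just a finite-support approximation to the enumeration, but also a ``promise'' about cover-membership, so that master-condition arguments can produce countable subcovers definably in $V[G_\alpha]$ for some intermediate stage $\alpha < \omega_2$. The technical heart of the proof will be verifying that this enriched forcing remains countably closed and has enough chain condition (e.g. $\aleph_2$-cc under $GCH$) that cardinals are preserved and the final object really has cardinality $2^{\aleph_1}$ rather than collapsing.
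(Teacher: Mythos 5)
The paper itself gives no proof of this theorem --- it simply cites \cite{Shelah1996} and \cite{Gorelic1993} --- so your proposal has to be measured against those constructions, and it contains a flaw that is fatal before any of the forcing machinery comes into play. You put $X \subseteq 2^{\omega_1}$ and generate the topology by the sets $[\sigma]$ for $\sigma$ a partial function from $\omega_1$ to $2$ with countable domain. In the extension $CH$ holds (you need it to), so this base has cardinality $\aleph_1^{\aleph_0}\cdot 2^{\aleph_0}=\aleph_1$. But in any $T_1$ space with points $G_\delta$, each point is the intersection of countably many members of any fixed base, so the map sending a point to such a countable subfamily is injective and $|X|$ is at most the $\aleph_0$-th power of the size of the base. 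Here that gives $|X|\le\aleph_1^{\aleph_0}=\aleph_1<\aleph_2=2^{\aleph_1}$. Equivalently: your own observation that each $f_\alpha$ must be separated from all other points by its restriction to a single countable $S_\alpha\subseteq\omega_1$ makes $\alpha\mapsto (S_\alpha, f_\alpha\restriction S_\alpha)$ an injection into a set of size $\aleph_1$. No choice of generic $f_\alpha$'s can evade this; the ambient cube $2^{\omega_1}$ is simply too small. The space must have weight at least $2^{\aleph_1}$: Gorelic's example sits inside (the $G_\delta$-modification of) $2^{\kappa}$ with $\kappa=2^{\aleph_1}$, where $\kappa^{\aleph_0}=\kappa$ leaves room for $\kappa$ points each isolated by countably many of the $\kappa$ coordinates.

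Even after correcting the ambient space, the Lindelöf verification --- which is the entire content of the theorem --- is only gestured at. Building an $\omega_1$-decreasing sequence of conditions that ``decides cover-membership for more and more points'' cannot reach all $2^{\aleph_1}$ points, and trimming $\aleph_1$-many basic sets down to countably many is not something $CH$ or a pressing-down argument does for you. What is actually needed is a density argument: for every condition and every name for an open cover of $X$ by basic clopen sets, some extension forces a distinguished countable subfamily to cover, and the conditions must be designed (Gorelic's ``promises'' and his Baire-category argument) so that the commitments accumulated along the way are simultaneously satisfiable while the poset stays countably closed and $\aleph_2$-cc. You correctly identify this as the technical heart, but you do not supply it, so the proposal is an outline of intent rather than a proof.
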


A key concept concerning  this problem is given in the following:

\begin{defn}
  A Lindelöf space is \emph{indestructible} if it remains Lindelöf
  after countably closed forcing.
\end{defn}

There is a combinatorial characterization of indestructibility we
shall mention in the next section, on topological games. There is also
interest in the general question of what forcings preserve
Lindelöfness --- see \cite{Kada}. It is known that adding at least
$\aleph_1$ Cohen reals makes a Lindelöf space indestructible
\cite{Dow1988} but the following problem is open:

\begin{prob}
  After adding one Cohen real, does a Lindelöf space become
  indestructible?
\end{prob}

The reason why indestructibility is of interest is because:

\begin{thm}[\cite{Tall1995}]\label{thm4p5}
  Collapse a supercompact cardinal to $\omega_2$ by countably closed
  forcing. Then every indestructible Lindelöf space with points
  $G_\delta$ has cardinality $\le 2^{\aleph_0}$.
\end{thm}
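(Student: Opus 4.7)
Plan. Let $\mathbb{P} = \mathrm{Coll}(\omega_1, <\kappa)$ be the Levy collapse witnessing the hypothesis, so $\mathbb{P}$ is countably closed, adds no reals, and forces $\kappa = \omega_2$. Let $G$ be $\mathbb{P}$-generic over $V$, and suppose for contradiction that $X \in V[G]$ is indestructibly Lindelöf with points $G_\delta$ and $|X| = \lambda > 2^{\aleph_0}$, so that $\lambda \ge \omega_2 = \kappa$. For each $x \in X$, fix in $V[G]$ a decreasing sequence of open sets $\langle U_n^x : n<\omega\rangle$ with $\bigcap_n U_n^x = \{x\}$.

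In $V$, invoke the $\lambda$-supercompactness of $\kappa$ to obtain an elementary embedding $j:V\to M$ with critical point $\kappa$, $j(\kappa) > \lambda$, and $M^\lambda \subseteq M$. Factor $j(\mathbb{P}) = \mathbb{P} * \dot{\mathbb{Q}}$, where $\dot{\mathbb{Q}}_G = \mathrm{Coll}(\omega_1,[\kappa,j(\kappa)))$ is countably closed in $V[G]$. Let $H$ be $\dot{\mathbb{Q}}_G$-generic over $V[G]$. Since the critical point is $\kappa$ and $\mathbb{P}\subseteq V_\kappa$ is fixed pointwise by $j$, we have $j[G] = G \subseteq G*H$, and $j$ lifts to an elementary $\hat{j}: V[G]\to M[G*H]$.

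By indestructibility, $X$ remains Lindelöf in $V[G*H]$. By elementarity, in $M[G*H]$ the space $\hat{j}(X)$ is (indestructibly) Lindelöf with points $G_\delta$, with the sets $\hat{j}(U_n^x)$ serving as the pointwise $G_\delta$ witnesses at $\hat{j}(x)$. Because $M^\lambda \subseteq M$ in $V$, the restriction $\hat{j}\!\restriction\! X$ belongs to $M[G*H]$; hence so does $\hat{j}[X] \subseteq \hat{j}(X)$. Crucially, in $M[G*H]$ one has $\omega_2^{M[G*H]} = j(\kappa) > \lambda$, so the set $\hat{j}[X]$, of $V$-cardinality $\lambda$, collapses to cardinality at most $\aleph_1$ in $M[G*H]$.

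The final—and hardest—step is to convert this asymmetry into a topological contradiction. The plan is: working inside $M[G*H]$, enumerate $\hat{j}[X] = \{\hat{j}(x_\alpha) : \alpha < \omega_1\}$ and use the Lindelöfness of $\hat{j}(X)$ together with the $G_\delta$-witnesses $\hat{j}(U_n^{x_\beta})$ for $\beta \ne \alpha$ to carry out a transfinite recursion of length $\omega_1$ selecting, at each stage $\alpha$, an integer $n_\alpha$ so that $\hat{j}(U_{n_\alpha}^{x_\alpha})$ separates $\hat{j}(x_\alpha)$ from all previously chosen images; by a standard Lindelöf pressing-down/thinning on the countable type $\alpha \mapsto (n_\alpha, \text{local data})$ one extracts an uncountable $A \subseteq \omega_1$ such that $\{\hat{j}(x_\alpha):\alpha\in A\}$ is closed discrete in $\hat{j}(X)$. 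Pulling this back through $\hat{j}$ yields $\{x_\alpha:\alpha\in A\} \subseteq X$ closed discrete in $V[G*H]$, contradicting Lindelöfness of $X$ in $V[G*H]$. The main obstacle is precisely this extraction: the argument must leverage simultaneously the $G_\delta$ structure, the size reduction of $\hat{j}[X]$ inside $M[G*H]$, and the $\lambda$-closure of $M$, to ensure that the closed discrete set built in $M[G*H]$ remains closed discrete back in $V[G*H]$.
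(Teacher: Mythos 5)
The paper does not reproduce a proof of Theorem \ref{thm4p5} (it simply cites \cite{Tall1995}), but your set-theoretic frame --- the Levy collapse, lifting $j$ through the countably closed tail, indestructibility giving Lindel\"ofness of $X$ in $V[G*H]$, and $\hat{j}[X]$ acquiring cardinality $\le\aleph_1$ inside $M[G*H]$ --- is exactly the standard supercompact reflection setup that Tall's argument is built on, and that part of your write-up is essentially correct (modulo routine care about taking enough supercompactness so that the topology of $X$ and $\hat{j}\restriction X$ actually land in $M[G*H]$).

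The topological endgame, however, is not a proof, and the plan you sketch for it cannot work as stated. First, the recursion is impossible in general: at stage $\alpha$ you want a single $n_\alpha$ with $\hat{j}(U^{x_\alpha}_{n_\alpha})$ missing every $\hat{j}(x_\beta)$ for $\beta<\alpha$, but a decreasing $\omega$-sequence of neighborhoods whose intersection is $\{\hat{j}(x_\alpha)\}$ need not have any single term excluding even an infinite set of other points --- the earlier points may accumulate at $\hat{j}(x_\alpha)$. Second, and more decisively, your extraction step uses only that $\hat{j}[X]$ is a subset of size $\aleph_1$ of a Lindel\"of space with points $G_\delta$, and from those data alone no uncountable closed discrete set can exist: any uncountable set of reals is Lindel\"of with points $G_\delta$ and has no uncountable (closed) discrete subspace. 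Worse, $X$ itself is, in $V[G*H]$, a Lindel\"of space with points $G_\delta$ of cardinality $\aleph_1$ (the tail collapse makes it so, while indestructibility keeps it Lindel\"of), so an argument that manufactures an uncountable closed discrete subset of $X$ from exactly those hypotheses would refute its own conclusion. Whatever closes the argument must use the elementarity of $\hat{j}$ in an essential way --- for instance the identity $\hat{j}(U)\cap\hat{j}[X]=\hat{j}[U]$ relating the trace of the topology of $\hat{j}(X)$ on $\hat{j}[X]$ to the topology of $X$ --- and not merely the cardinality collapse. You correctly flag this as the main obstacle, but flagging it is not surmounting it; the proof is missing precisely at its crucial step.
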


We thus want to know:

\begin{prob}
  Does $CH$ imply every Lindelöf $T_2$ space with points $G_\delta$ is
  indestructible?
\end{prob}

Although the method of proof of Theorem \ref{thm4p5} is a good illustration of supercompact reflection, Theorem \ref{thm4p5} itself has been significantly improved by Marion Scheepers to obtain:
\begin{thm}[\cite{Scheepers2009}]
Collapse a measurable cardinal to $\omega_2$ by countably closed forcing.  Then every indestructible Lindel\"{o}f space with points $G_\delta$ has cardinality $\leq 2^{\aleph_0}$.
\end{thm}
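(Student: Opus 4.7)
The plan is to replay the proof of Theorem \ref{thm4p5}, with the supercompact reflection replaced by a single ultrapower embedding arising from a normal measure. Fix a normal ultrafilter $\mc{U}$ on the measurable cardinal $\kappa$, and let $j : V \to M$ be the associated ultrapower, with critical point $\kappa$ and the closure property $M^\kappa \cap V \subseteq M$. Let $\mb{P} = \mathrm{Coll}(\omega_1, {<}\kappa)$ and let $G$ be $V$-generic, so that in $V[G]$ the cardinal $\kappa$ becomes $\omega_2$. Suppose toward a contradiction that $X \in V[G]$ is an indestructible Lindel\"{o}f $T_2$ space with points $G_\delta$ and $|X| > 2^{\aleph_0}$; a standard reduction lets us take $|X| = \kappa$.

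The central step is to produce a lift $\bar{\jmath} : V[G] \to M[G][H]$ of $j$. In $M$, $j(\mb{P})$ factors as $\mb{P} \ast \dot{\mb{Q}}$, where $\dot{\mb{Q}}$ is forced to be the tail Lévy collapse $\mathrm{Coll}(\omega_1, [\kappa, j(\kappa)))^{M[G]}$. The closure property $M^\kappa \cap V \subseteq M$ guarantees (i) that $\mb{Q}$ is still countably closed when viewed in $V[G]$, and (ii) that the collection of dense subsets of $\mb{Q}$ belonging to $M[G]$ has size at most $\kappa^+$. Forcing with $\mb{Q}$ over $V[G]$ then yields a filter $H$ which, after the standard master-condition bookkeeping, is $M[G]$-generic. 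This is precisely where the hypothesis on $X$ is spent: because $X$ is \emph{indestructible} and $\mb{Q}$ is countably closed, $X$ remains Lindel\"{o}f in $V[G][H]$, and the embedding $\bar{\jmath}$ lifts.

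With $\bar{\jmath}$ in hand, the contradiction is extracted by an Arhangel'ski\u\i{}-style closing-off argument carried out in $M[G][H]$. For each $x \in X$, fix in $V[G]$ a decreasing $G_\delta$ witness $\{x\} = \bigcap_n U^x_n$; elementarity gives $\{\bar{\jmath}(x)\} = \bigcap_n \bar{\jmath}(U^x_n)$ inside $\bar{\jmath}(X)$. Using the Lindel\"{o}fness of $\bar{\jmath}(X)$ in $M[G][H]$, the fact that $\bar{\jmath}(X)$ has points $G_\delta$, and the fact that $\bar{\jmath}[X]$ is a subset of $\bar{\jmath}(X)$ of cardinality $\kappa < j(\kappa)$, one builds a set $A \subseteq X$ of cardinality $\le 2^{\aleph_0}$ such that the family $\{U^x_n : x \in A,\ n < \omega\}$ already covers $X$. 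This bounds $|X| \le 2^{\aleph_0}$, contradicting our assumption.

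The main obstacle is the lifting step. One must verify simultaneously that $\mb{Q}$ is countably closed from the viewpoint of $V[G]$ so that indestructibility of $X$ applies, and that $M[G]$ admits only $\kappa^+$ dense subsets of $\mb{Q}$ to meet; both follow from $M^\kappa \cap V \subseteq M$, which is the slack afforded by measurability and which is all Tall's supercompact argument ever really required. Every other step, including the Arhangel'ski\u\i{}-style cardinality bookkeeping, is a faithful transcription of the supercompact proof.
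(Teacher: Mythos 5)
Your plan --- rerun the supercompact reflection behind Theorem \ref{thm4p5} with a single normal-measure ultrapower --- is exactly the naive approach that does \emph{not} go through, and the paper itself signals this: it states that Scheepers' improvement from supercompact to measurable is obtained ``via games on an ideal,'' i.e.\ by a genuinely different technique (transferring a winning strategy in a game on the ideal dual to the measure into the game $G_{1}^{\omega_1}(\mc{O},\mc{O})$, whose strategic analysis characterizes indestructibility), not by weakening the closure demanded of the target model. The concrete gaps are these. First, ``a standard reduction lets us take $|X|=\kappa$'' does not exist: being a counterexample is not inherited by subspaces, since Lindel\"ofness is not hereditary --- whether a large Lindel\"of space even has a small Lindel\"of subspace is precisely the Hajnal--Juh\'asz problem of Section~6. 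If $|X|>\kappa$, the measurable's closure $M^\kappa\cap V\subseteq M$ is hopeless for putting $j\restriction X$, let alone $j\restriction\tau$, into the target model, whereas the supercompact argument chooses the closure degree $\lambda$ \emph{after} seeing $X$ and its topology; that freedom is what the supercompact hypothesis buys and what a measurable cannot supply. Second, even granting $|X|=\kappa$ with underlying set $\kappa$, the topology $\tau$ may have size $2^{\kappa}$, so $j\restriction\tau$ and the trace of $j(\tau)$ on $j[X]$ are not available in $M[G][H]$; your third paragraph, where ``one builds a set $A\subseteq X$ of cardinality $\le 2^{\aleph_0}$ such that $\{U^x_n : x\in A,\ n<\omega\}$ covers $X$,'' is a black box that contains the entire content of the theorem and is precisely the step that needs this missing data.

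There is also a structural confusion about where indestructibility is spent. Lifting $j$ to $V[G]$ has nothing to do with $X$: the tail collapse $\mb{Q}$ is countably closed and $H$ can be taken fully $V[G]$-generic, whence it is automatically $M[G]$-generic because $M[G]\subseteq V[G]$ and the poset is computed the same way in both models; no master condition or counting of dense sets is needed or relevant. Indestructibility must instead enter the topological endgame --- in Tall's argument it is what makes the copy of $X$ sitting inside $j(X)$ (with its \emph{finer} subspace topology, refined by the new open sets of $j(\tau)$ and by the countably closed forcing) still Lindel\"of where the closing-off is performed --- but your sketch never uses it there, instead invoking only the Lindel\"ofness of $j(X)$ itself, which is free by elementarity. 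Since the one hypothesis that distinguishes this theorem from the (consistently false) unrestricted statement is never genuinely used, the argument as written cannot be correct. To prove the stated theorem you should follow Scheepers: use the game-theoretic characterization of indestructibility from Section~5 and play it against a game on an ideal attached to the measurable cardinal, rather than attempting to lift an ultrapower embedding.
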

Scheepers uses games on an ideal.

In \cite{ScheepersTall} Scheepers and the author obtained positive
consistency results by strengthening Lindelöf to \emph{Rothberger}.  Their use of supercompact reflection was again improved to measurable reflection in \cite{Scheepers2009}.

\begin{defn}
  A space is \emph{Rothberger} if whenever $\{ \mc{U}_n
  \}_{n<\omega}$ are open covers of it, there are $U_n \in \mc{U}_n$,
  each $n \in \omega$, such that $\{U_n\}_{n<\omega}$ is a cover.
\end{defn}

\begin{thm}[\cite{Scheepers2009}]
  Collapse a measurable to $\omega_2$ by countably closed forcing. Then Rothberger spaces with points $G_\delta$ have size
  $\le \aleph_1$.
\end{thm}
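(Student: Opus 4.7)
The plan is to adapt the reflection argument of Theorem \ref{thm4p5} to the measurable setting, using the single-selection discipline of Rothberger in place of countably closed indestructibility. The hope is that this lowers both the large-cardinal hypothesis (supercompact to measurable) and the cardinality bound ($2^{\aleph_0}$ to $\aleph_1$).

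\textbf{Step 1 (Lifting the embedding).} Let $\kappa$ be measurable in $V$ with normal ultrafilter $\mc{U}$ and ultrapower $j : V \to M$, so $\mathrm{crit}(j) = \kappa$ and $M^\kappa \subseteq M$. The Levy collapse $\mb{P} := \Lv(\omega_1, {<}\kappa)$ is countably closed and $\kappa$-cc (the latter because $\kappa$ is inaccessible), and $j(\mb{P})$ factors as $\mb{P} * \dot{\mb{Q}}$, with $\dot{\mb{Q}}$ naming the tail collapse $\Lv(\omega_1, [\kappa, j(\kappa)))$. A generic $G$ for $\mb{P}$ over $V$ together with a generic $G'$ for $\dot{\mb{Q}}_G$ over $V[G]$ lifts $j$ to an elementary $j : V[G] \to M[G][G']$ inside $V[G][G']$.

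\textbf{Step 2 (Setup).} Suppose for contradiction that $X \in V[G]$ is Rothberger with points $G_\delta$ and $|X| \ge \kappa = \aleph_2^{V[G]}$. Fix distinct $\{x_\alpha : \alpha < \kappa\} \subseteq X$ and decreasing open $V^\alpha_n \ni x_\alpha$ with $\bigcap_n V^\alpha_n = \{x_\alpha\}$. By elementarity, $j(X)$ is Rothberger with points $G_\delta$ in $M[G][G']$.

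\textbf{Step 3 (Forcing the contradiction).} I would construct in $M[G][G']$ a sequence $\{\mc{U}_n\}_{n < \omega}$ of open covers of $j(X)$, built from the images $\{j(V^\alpha_n) : \alpha < j(\kappa)\}$ together with auxiliary single-point deletions, such that for \emph{any} Rothberger selection $U_n \in \mc{U}_n$ the ``omitted-index'' set $A_n := \{\alpha < \kappa : x_\alpha \notin U_n\}$ is $\mc{U}$-positive (meets every element of $\mc{U}$ coming from $V$). The $\sigma$-completeness of $\mc{U}$, preserved through the iteration, then forces $\bigcap_n A_n$ nonempty; any $\gamma$ in the intersection yields $x_\gamma \in j(X)$ uncovered by the selection, contradicting Rothberger of $j(X)$. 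Pulling this back by elementarity refutes $|X| \ge \kappa$, giving $|X| \le \aleph_1$.

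\textbf{Main obstacle.} The heart of the argument is Step 3: designing $\mc{U}_n$ so that every single member is $\mc{U}^*$-small on $\{x_\alpha : \alpha < \kappa\}$, while the family as a whole covers $j(X)$. This is precisely where Scheepers's ``games on an ideal'' machinery is essential: one formulates a selection game on the ideal $\mc{U}^*$ dual to the measure and shows that Rothberger combined with points $G_\delta$ forces every conceivable TWO-response to miss a $\mc{U}$-positive block of the $x_\alpha$'s. The reduction to $\aleph_1$ (rather than $2^{\aleph_0}$) is the combinatorial dividend of one-selection-per-round: Menger or merely indestructible Lindel\"of permits finite or countable selections, shrinking the $\mc{U}^*$-budget too fast and yielding only the weaker continuum bound obtained in Theorem \ref{thm4p5}.
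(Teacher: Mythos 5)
This is a survey item: the paper does not actually prove the theorem, but cites \cite{Scheepers2009} and indicates the intended route in the surrounding text --- namely that Rothberger spaces are indestructible (they remain Rothberger, hence Lindel\"{o}f, after countably closed forcing) and that the cardinality bound then follows from Scheepers's reflection argument via ``games on an ideal'' at the measurable. Measured against that, your proposal is not yet a proof of anything: Steps 1 and 2 are standard boilerplate (lifting the ultrapower embedding through the Levy collapse), while Step 3 --- the entire mathematical content --- merely describes the covers you ``would construct'' and then explicitly concedes, under the heading ``Main obstacle,'' that their construction is the hard part and requires machinery you have not supplied. That is a plan, not an argument.

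Worse, Step 3 as written contains a genuine error, not just an omission. The countably closed collapse adds new subsets of $\kappa$, so in $V[G][G']$ the normal measure $\mc{U}$ only generates a filter, no longer an ultrafilter, and it does not measure the sets $A_n$, which are defined from a selection living in $M[G][G']$. For a filter that is not an ultrafilter, ``$A_n$ is $\mc{U}$-positive'' (meets every set in the filter) is far weaker than ``$A_n$ belongs to the filter,'' and a countable intersection of positive sets can perfectly well be empty; $\sigma$-completeness gives you nothing. This is exactly the difficulty Scheepers's game on the dual ideal is designed to overcome: one needs that ONE has no winning strategy in a suitable game of length $\omega$ (or $\omega_1$) on the ideal, a consequence of measurability that is provably preserved by countably closed forcing, and the Rothberger selections must be played against that game rather than against the raw measure. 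Without that ingredient your intersection argument collapses. Finally, the claim that the improvement from $2^{\aleph_0}$ to $\aleph_1$ is ``the combinatorial dividend of one-selection-per-round'' is asserted rather than proved; the actual source of the $\aleph_1$ bound is that the Rothberger property itself (not merely Lindel\"{o}fness) survives the collapse, so the reflection can be run against the Rothberger game on a putative configuration of size $\aleph_2$ --- and that step needs to be carried out, not named.
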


This is because Rothberger space are indestructible, which is fine,
but ``Rothberger'' is too strong --- for sets of reals it implies
strong measure zero. The Menger property we mentioned before is a
weakening of Rothberger; we therefore ask:

\begin{prob}
  Is it consistent with $ZFC$ that every Menger $T_2$ space with
  points $G_\delta$ has cardinality $\le 2^{\aleph_0}$?
\end{prob}

This is reasonable since ``Menger'' is also a weakening of
``$\sigma$-compact'' and we have:

\begin{thm}
  Every $\sigma$-compact $T_2$ space with points $G_\delta$ has
  cardinality $\le 2^{\aleph_0}$.
\end{thm}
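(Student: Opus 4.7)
The plan is to reduce the statement to the compact Hausdorff case and then invoke Arhangel'ski\u{\i}'s theorem (the Lemma stated at the start of Section 4).

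Write $X = \bigcup_{n<\omega} K_n$ with each $K_n$ compact and $T_2$. Every subspace of a space with points $G_\delta$ has points $G_\delta$, so each $K_n$ is a compact $T_2$ space with points $G_\delta$. Since a countable union of sets of size $\le 2^{\aleph_0}$ has size $\le 2^{\aleph_0}$, it suffices to bound $|K_n|$. In other words, we may assume $X$ itself is compact $T_2$.

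Next I would show that a compact $T_2$ space with points $G_\delta$ is first countable. Fix $x \in X$ and write $\{x\} = \bigcap_{n<\omega} U_n$ with each $U_n$ open. Compact $T_2$ spaces are regular, so for each $n$ choose open $V_n$ with $x \in V_n \subseteq \overline{V_n} \subseteq U_n$; replacing $V_n$ by $V_0 \cap \cdots \cap V_n$ we may assume the sequence is decreasing. To see $\{V_n\}$ is a local base, let $W$ be any open neighborhood of $x$. Then $\{\overline{V_n} \setminus W : n < \omega\}$ is a decreasing family of closed subsets of the compact set $\overline{V_0}$ whose intersection is $\bigcap_n \overline{V_n} \setminus W \subseteq \{x\} \setminus W = \emptyset$, so by compactness some $\overline{V_{n_0}} \setminus W = \emptyset$, i.e.\ $V_{n_0} \subseteq W$.

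Finally, $X$ is compact and hence Lindelöf, so Arhangel'ski\u{\i}'s theorem yields $|X| \le 2^{\aleph_0}$. There is no real obstacle here; the only mildly substantive point is the passage from ``points $G_\delta$'' to first countability in the compact Hausdorff setting, which is a standard compactness argument via the finite intersection property.
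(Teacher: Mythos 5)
Your proof is correct and is essentially the paper's own argument: the paper's entire proof is the one-line observation that compact $T_2$ spaces with points $G_\delta$ are first countable, leaving implicit exactly the reduction to compact pieces and the appeal to Arhangel'ski\u{\i}'s theorem that you spell out. Your verification of first countability via the finite intersection property is the standard argument and is correct.
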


\begin{proof}
  Compact $T_2$ spaces with points $G_\delta$ are first countable.
\end{proof}

Arhangel'ski\u\i{} also asked:

\begin{prob}
  Do first countable $T_1$ Lindelöf spaces have cardinality $\le
  2^{\aleph_0}$?
\end{prob}

Points are $G_\delta$'s in such a space, so the usual positive
consistency results apply, but so far we have not been able to go
beyond these. No counterexamples are known, even consistently. A
natural try is to take a Lindelöf space with points $G_\delta$ and
consider the weaker topology generated by witnesses for ``$G_\delta$''
about each point. Unfortunately, there is no reason to believe the
resulting topology is first countable. If we strengthen ``points
$G_\delta$'' to ``point-countable $T_1$-separating open cover'',
i.e. an open cover $\mc{U}$ such that each point is in only countably
many members of the cover and for each $x \neq y \in X$, there is a $U
\in \mc{U}$ such that $x \in U$ and $y \notin U$, then when we take
$\mc{U}$ as a subbase for a topology, that topology is first
countable, so we would have that if there were a Lindelöf space of
size $> 2^{\aleph_0}$ with a point-countable $T_1$-separating open
cover, then there would be a Lindelöf first countable $T_1$ space of
size $> 2^{\aleph_0}$. However, in fact Lindelöf spaces with
point-countable $T_1$-separating open covers have cardinality $\le
2^{\aleph_0}$ \cite{Charlesworth1977} so this is a dead end.

\section{Topological Games}

\begin{defn}
  The \emph{Menger game} is an $\omega$-length perfect information
  game in which in the $n$th inning, ONE chooses an open cover
  $\mc{U}_n$ and TWO chooses a finite $\mc{V}_n \subseteq
  \mc{U}_n$. TWO wins if $\{\bigcup \mc{V}_n : n < \omega \}$ covers
  $X$. Otherwise, ONE wins.
\end{defn}

\begin{thm}[\cite{Hurewicz1925}]
  $X$ is Menger if and only if ONE has no winning strategy in the
  Menger game on $X$.
\end{thm}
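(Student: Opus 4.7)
The theorem has two directions, of which $(\Leftarrow)$ is immediate. If ONE has no winning strategy, then in particular the ``ignore TWO'' strategy that plays a fixed sequence $\{\mc{U}_n\}_{n<\omega}$ of open covers regardless of TWO's moves is not winning; a winning response for TWO against this strategy is exactly a choice of finite $\mc{V}_n \subseteq \mc{U}_n$ with $\bigcup_n \bigcup \mc{V}_n = X$, which is the Menger property.

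For the substantive direction $(\Rightarrow)$, assume $X$ is Menger and let $\sigma$ be any strategy for ONE; the goal is to exhibit a play of TWO that defeats $\sigma$. My plan is first to reduce to the case that each $\sigma(s)$ is a countable open cover: since Menger implies Lindel\"of, I can replace $\sigma(s)$ by a countable subcover, observing that a winning play of TWO against this reduced strategy is also a winning play against $\sigma$ itself (every finite subset of the countable subcover is a finite subset of $\sigma(s)$). With ONE's covers countable, TWO's legal moves at each stage form a countable set, so the tree $T$ of legal partial plays against $\sigma$ is a countable tree, identifiable with a subtree of $\omega^{<\omega}$. Enumerating $T = \{s_j : j<\omega\}$, I would then apply the Menger property to the countable sequence of open covers $\{\sigma(s_j) : j<\omega\}$ to obtain finite subfamilies $\mc{V}(s_j) \subseteq \sigma(s_j)$ with $\bigcup_j \bigcup \mc{V}(s_j) = X$.

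The main obstacle --- and the combinatorial heart of the proof --- is to extract from these scattered finite subfamilies a single $\omega$-branch of $T$ along which TWO's corresponding plays actually cover $X$: a single branch visits only countably many nodes, so nothing a priori forces the $\mc{V}(s)$'s along any one branch to cover all of $X$. To overcome this I would enrich the covers appearing in the Menger application so that each selected open set carries the partial-play history needed to place it into TWO's move at the appropriate stage, and then invoke a K\"onig-style pruning on the resulting subtree of $T$ to produce a branch that witnesses coverage of all of $X$. Alternatively one can avoid the single application of Menger in favour of a recursive construction of TWO's play in which the Menger property is invoked iteratively, at each stage on a sequence of covers tailored to the partial history so far, arranged so that the finite selection made at stage $n$ shrinks the remaining uncovered portion of $X$ and that after $\omega$ stages every point has been caught.
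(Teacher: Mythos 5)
Your left-to-right reduction (``ONE has no winning strategy $\Rightarrow$ Menger'') is correct and is exactly the observation the paper itself makes in Section 5 (``if ONE does not have a winning strategy \ldots then clearly the space is Menger''). The paper gives no proof of the converse: it is cited to Hurewicz, and the author explicitly warns that this direction is ``true, but not so obvious.'' That warning applies to your proposal. You set the stage correctly --- reduce to countable covers via Lindel\"ofness, organize the plays against $\sigma$ into a countable tree $T$, and apply the Menger property to the covers attached to the nodes of $T$ --- and you correctly diagnose the obstacle: the finite selections $\mc{V}(s_j)$ live at pairwise incomparable nodes, and TWO must produce a single legal run, i.e.\ a single branch, whose selections cover $X$. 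But at precisely that point the proof stops. ``Enrich the covers so that each selected open set carries the partial-play history'' and ``invoke a K\"onig-style pruning'' are not arguments; a naive K\"onig's lemma application fails because the subtree spanned by the selected nodes need not be finitely branching, and even when it is, no single branch through it need pass through enough selected nodes to cover $X$. The alternative you offer --- an iterated, stage-by-stage invocation of the Menger property ``arranged so that'' the uncovered part shrinks to nothing --- likewise asserts the conclusion rather than proving it: the Menger property gives no control over \emph{which} points the finite selection at stage $n$ catches, so nothing guarantees that every point is eventually caught along the branch you are building.

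The missing idea is the genuinely clever part of Hurewicz's theorem. One standard route (cf.\ Scheepers' treatment) first closes ONE's covers under finite unions so that TWO may be assumed to choose a single set per inning, indexes ONE's possible moves by $\omega^{<\omega}$ as sets $U_\sigma$, and then applies the selection principle not to the covers $\sigma(s_j)$ themselves but to carefully built auxiliary covers whose members encode initial segments of plays (e.g.\ sets of the form $U_{\sigma\restriction 1}\cup\cdots\cup U_\sigma$); the remaining work --- threading the finitely many selected nodes at each level into one branch that defeats $\sigma$ --- is a nontrivial fusion argument, not a routine pruning. Since that step is exactly what you leave unspecified, the proposal as written does not establish the substantive direction of the theorem.
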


Telgársky \cite{Telgarsky} proved:

\begin{thm}\label{thm16}
  For metrizable $X$, $X$ is $\sigma$-compact if and only if TWO has a
  winning strategy in the Menger game on $X$.
\end{thm}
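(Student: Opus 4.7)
The forward implication does not require metrizability. Write $X = \bigcup_{n<\omega} K_n$ with each $K_n$ compact and, after replacing $K_n$ by $K_0 \cup \cdots \cup K_n$, assume the sequence is increasing. In the $n$th inning, given ONE's cover $\mc{U}_n$, TWO picks a finite $\mc{V}_n \subseteq \mc{U}_n$ covering $K_n$; compactness of $K_n$ makes this possible, and $\bigcup_{n<\omega} \bigcup \mc{V}_n \supseteq \bigcup_{n<\omega} K_n = X$.

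For the converse, let $X$ be metrizable with a compatible metric $d$ and let $\tau$ be a winning strategy for TWO in the Menger game. Since ONE then has no winning strategy, Hurewicz's theorem (the theorem immediately preceding) gives that $X$ is Menger, hence Lindel\"of, hence (being metrizable) separable, with a countable base $\mc{B}$. My plan is to feed $\tau$ a canonical countable family of covers and, via a rank analysis of the resulting game tree, extract a $\sigma$-compact decomposition of $X$.

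Concretely, enumerate a countable family $(\mc{W}_k)_{k<\omega}$ of open covers of $X$ by members of $\mc{B}$ that is cofinal in the refinement order on open covers and satisfies $\sup\{\operatorname{diam}_d(W) : W \in \mc{W}_k\} \leq 1/2^k$. For each $s = \langle k_0,\ldots,k_{n-1}\rangle \in \omega^{<\omega}$, set $\mc{V}_i^s = \tau(\mc{W}_{k_0},\ldots,\mc{W}_{k_i})$ and $L(s) = X \setminus \bigcup_{i<n} \bigcup \mc{V}_i^s$. For each $x \in X$ the set $T_x = \{s : x \in L(s)\}$ is a subtree of $\omega^{<\omega}$, and it is well-founded, because otherwise $\tau$ would lose against the branch witnessing ill-foundedness. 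Hence $x$ acquires a foundation rank $\rho(x) < \omega_1$.

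The main obstacle is to convert this ranking into a $\sigma$-compact decomposition. I would argue by induction on $\alpha < \omega_1$ that $\{x : \rho(x) \leq \alpha\}$ is a countable union of compact subsets of $X$. The base case uses that $\{x : \rho(x) = 0\} \subseteq \bigcap_{k<\omega} \bigcup \tau(\mc{W}_k)$, a countable intersection of finite unions of open sets of $d$-diameter at most $1/2^k$; the mesh condition together with cofinality of $(\mc{W}_k)$ makes this set totally bounded, hence relatively compact in the completion, and the openness of each $\bigcup\tau(\mc{W}_k)$ in $X$ puts the needed limit points back in $X$. The successor and limit steps apply the same analysis to the sub-strategies $\tau_s(\cdot) = \tau(\mc{W}_{k_0},\ldots,\mc{W}_{k_{n-1}},\cdot)$ obtained by fixing an initial play $s$. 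A final bookkeeping argument, using that the tree $\omega^{<\omega}$ is countable, shows that only countably many ranks actually arise. Metrizability is used essentially both to obtain the countable base of covers and to convert total boundedness into compactness; dropping it would collapse precisely the step that produces compacta from $\tau$.
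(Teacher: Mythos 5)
The paper does not actually prove Theorem \ref{thm16}; it cites Telg\'arsky and points to Scheepers' ``direct proof,'' so your argument has to stand on its own. The forward direction is correct (and, as you say, needs no metrizability). The converse, however, breaks at its very first step: no non-compact metric space admits a countable family of open covers cofinal under refinement. Already for $\mb{R}$, given candidates $(\mc{W}_k)_{k<\omega}$, choose $W_k\in\mc{W}_k$ with $(k-\epsilon_k,\,k+\epsilon_k)\subseteq W_k$; then the open cover $\{(n-\epsilon_n/2,\,n+\epsilon_n/2):n<\omega\}\cup\{\mb{R}\setminus\omega\}$ is refined by no $\mc{W}_k$. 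So the object on which your whole construction rests exists only when $X$ is already compact.

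Even granting some countable family with mesh tending to $0$, your compactness step fails. Total boundedness of $\bigcap_k\bigcup\tau(\mc{W}_k)$ gives compact closure only in the completion, and ``openness of each $\bigcup\tau(\mc{W}_k)$ puts the limit points back in $X$'' is not a valid principle: in $X=\mb{Q}\cap[0,1]$ every open subset is totally bounded, and a countable intersection of open sets can have closure all of $X$, which is not compact. Since every separable metrizable space carries a totally bounded compatible metric, ``countable union of totally bounded sets'' has no content; producing compacta whose closures stay \emph{inside} $X$ is precisely where the difficulty of Telg\'arsky's theorem lies. The repair, which is the heart of Scheepers' proof, is to set, for a position $\sigma$, $C_\sigma=\bigcap\{\bigcup\tau(\sigma^\frown\mc{U}):\mc{U}\ \text{an arbitrary open cover of}\ X\}$. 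Then \emph{every} open cover of $X$ has a finite subfamily covering $C_\sigma$, and in a regular space this forces $\overline{C_\sigma}$ to be compact in $X$ (shrink each cover element to one whose closure it contains, add $X\setminus\overline{C_\sigma}$, extract a finite subfamily covering $C_\sigma$, and take closures). Countability of the resulting decomposition then comes not from a cofinal family of covers but from the fact that, once ONE is restricted to covers by members of a countable base, TWO's responses are finite subsets of that base, so only countably many response-sequences occur; your well-foundedness observation is then what shows every point lands in some $C_\sigma$. With those two replacements your outline becomes Scheepers' argument; as written, it does not go through.
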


Scheepers \cite{Scheepers1995} provided a much more accessible proof.

In an earlier version of this survey and in \cite{TallMay} we asked whether ``metrizable" was necessary in Theorem \ref{thm16}.  T. Banakh and L. Zdomskyy \cite{Z} proved that it wasn't, replacing ``metrizable" by ``hereditarily Lindel\"{o}f $T_3$".

\ignore{
\begin{prob}\cite{TallMay}\label{prob9}
  For Lindelöf $X$, is $X$ projectively $\sigma$-compact if and only
  if TWO has a winning strategy in the Menger game on $X$?
\end{prob}
}
Several variations on the Menger property and the Menger game have
been considered. See e.g. \cite{JMSS} and \cite{ScheepersTall}. One
can vary the kind of cover, the kind of subsets of the cover, and the
length of the game. Notice that if ONE % Player I
does not have a winning strategy in the Menger game, then clearly the
space is Menger. The converse is true, but not so obvious. That remark
in general applies to $\omega$-length games of this sort. See
e.g. \cite{Hurewicz1925} and \cite{Pawlikowski1994}. However, for
$\omega_1$-length games, although again ONE %Player I
having no winning strategy easily implies the $\omega_1$-version of
Menger, the converse is not clear. The notation used by Scheepers is
convenient: $G_{\mathrm{FIN}}^{\omega}(\mc{O}, \mc{O})$ is the Menger
game; $G_{1}^{\omega}(\mc{O},\mc{O})$ is the \emph{Rothberger game}
in which we only pick one element from each
cover. $G_{\mathrm{FIN}}^{\omega_1}(\mc{O},\mc{O})$,
$G_{1}^{\omega_1}(\mc{O},\mc{O})$ are respectively the
$\omega_1$-versions of the Menger and Rothberger games. The selection
principles $S_{\mathrm{FIN}}^{\omega_1}(\mc{O},\mc{O})$
($S_{1}^{\omega_1}(\mc{O},\mc{O})$) assert that it is possible to pick
finitely many elements (one element) from each cover in a
$\omega_1$-sequence and get a cover. The problem of determining
whether $S_{1}^{\omega_1}(\mc{O},\mc{O})$ implies ONE has no winning
strategy in $G_{1}^{\omega_1}(\mc{O},\mc{O})$ is particularly
interesting and is asked in \cite{ScheepersTall}:

\begin{prob}
  Let $X$ be a Lindelöf space. Suppose $X$ satisfies
  $S_{1}^{\omega_1}(\mc{O},\mc{O})$
  ($S_{\mathrm{FIN}}^{\omega_1}(\mc{O},\mc{O})$). Does ONE not have a winning
  strategy in $G_{1}^{\omega_1}(\mc{O},\mc{O})$
  ($G_{\mathrm{FIN}}^{\omega_1}(\mc{O},\mc{O})$) on $X$?
\end{prob}

It is interesting because of:

\begin{thm}[\cite{ScheepersTall}]
  ONE has no winning strategy in $G_{1}^{\omega_1}(\mc{O},\mc{O})$ for
  a Lindelöf space $X$ if and only if $X$ is indestructible.
\end{thm}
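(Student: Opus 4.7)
The plan is to prove both directions by contraposition, using countable closure together with Lindel\"ofness of $X$ in $V$ as the common engine. In one direction a forcing that destroys Lindel\"ofness is turned into a winning strategy for ONE; in the other, a winning strategy for ONE is turned into a countably closed forcing that destroys Lindel\"ofness.

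For the direction ``$X$ destructible $\Rightarrow$ ONE has a winning strategy,'' I would fix a countably closed $\mb{P}$ and a $p \in \mb{P}$ forcing $\dot{\mc{U}}$ to be an open cover of $X$ with no countable subcover, and define $\sigma$ so that it simultaneously plays the game and descends in $\mb{P}$. Maintain a descending chain $p = p_0 \geq p_1 \geq \cdots$; at stage $\alpha$ let ONE play
$$\mc{U}_\alpha = \{U \in \mc{T} : \exists q \leq p_\alpha,\ q \Vdash U \in \dot{\mc{U}}\},$$
which covers $X$ because $\dot{\mc{U}}$ is forced to cover. After TWO picks $V_\alpha \in \mc{U}_\alpha$, take $p_{\alpha+1} \leq p_\alpha$ with $p_{\alpha+1} \Vdash V_\alpha \in \dot{\mc{U}}$; at limit stages use countable closure to take a lower bound. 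If TWO's moves ever covered $X$, Lindel\"ofness in $V$ would give a countable subcover $\{V_{\alpha_n}\}$, and $p_\gamma$ with $\gamma = \sup_n \alpha_n + 1$ would force $\{V_{\alpha_n}\}$ to be a countable subcover of $\dot{\mc{U}}$, contradicting the choice of $p$.

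For the converse I would assume toward contradiction that $\sigma$ is winning for ONE while $X$ is indestructible. Let $\mb{P}_\sigma$ be the poset of partial plays of length ${<}\omega_1$ following $\sigma$, ordered by end-extension; this is countably closed because the union of an $\omega$-descending chain of countable plays is again a countable play. For each $x \in X$ the set of conditions in which $x$ lies in some TWO-move is dense --- any partial play can be extended by having TWO pick, from $\sigma$'s next prescribed cover, an element containing $x$ --- so in $V[G]$ the TWO-moves $\langle V_\beta : \beta < \omega_1 \rangle$ cover $X$. By indestructibility $X$ remains Lindel\"of in $V[G]$, yielding a countable subcover whose indexing sequence, by countable closure, already lies in $V$; hence some $p \in G$ is long enough that its TWO-moves already cover $X$. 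Extending $p$ arbitrarily to a full $\omega_1$-play in $V$ then produces a play won by TWO, contradicting $\sigma$ being winning in $V$.

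The main obstacle is conceptual rather than computational: the full play $\langle V_\beta \rangle_{\beta<\omega_1}$ is a genuinely new object of $V[G]$, so one cannot simply transport a $\sigma$-losing play back to $V$. What countable closure does let one transport is a witnessing countable subcover, and the key realization is that this subcover already sits inside the TWO-moves of a single ground-model condition, certifying a losing initial segment of $\sigma$ in $V$. Getting this interplay between the $\omega$-scale controlled by closure and the $\omega_1$-length winning condition correctly positioned is where the care is needed; everything else is bookkeeping about the conditions and covers.
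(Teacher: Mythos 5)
The paper only cites \cite{ScheepersTall} for this theorem, and your argument is essentially the proof given there: descending through a countably closed forcing that destroys Lindel\"ofness to convert the name of a bad cover into a strategy for ONE, and, conversely, forcing with the tree of partial $\sigma$-plays ordered by end-extension, where Lindel\"ofness in the extension plus countable closure pins a countable subcover inside a single ground-model condition, certifying a play lost by ONE in $V$. The one step you should make explicit is the standard normalization that the forced cover $\dot{\mc{U}}$ may be refined to one consisting of ground-model open sets (open sets of the extension are unions of ground-model basic open sets), since otherwise $\mc{U}_\alpha = \{U \in \mc{T} : \exists q \leq p_\alpha,\ q \Vdash U \in \dot{\mc{U}}\}$ need not cover $X$.
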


\section{Lindelöf spaces with no small Lindelöf subspaces}

Hajnal and Juhász \cite{Hajnal1976} asked:

\begin{prob}
  Does every Lindelöf space of size $\aleph_2$ have a Lindelöf
  subspace of size $\aleph_1$?
\end{prob}

An extensive discussion of what is known and why the problem is
interesting can be found in \cite{Baumgartner2002}. There is a
consistent counterexample:

\begin{thm}[\cite{Koszmider2002}]
  It is consistent with $GCH$ that there is a Lindelöf space of size
  $\aleph_2$ with no Lindelöf subspaces of size $\aleph_1$.
\end{thm}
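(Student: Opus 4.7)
The plan is to work in a model of GCH enriched with strong combinatorial principles (such as $\diamondsuit_{\omega_2}$ together with $\square_{\omega_1}$, both of which hold in $L$ and are consistent with GCH) and to construct an explicit topology on $\omega_2$ by transfinite recursion of length $\omega_2$. I would build a subbase for a topology $\tau$ on $X=\omega_2$ so that $(X,\tau)$ is Lindel\"{o}f, and so that for every $Y\subseteq X$ with $|Y|=\aleph_1$ a canonical open cover $\mc{U}_Y$ is available that witnesses the failure of Lindel\"{o}fness of $Y$.

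First I would fix a $\square_{\omega_1}$-sequence $\langle C_\alpha : \alpha<\omega_2,\ \mathrm{cf}(\alpha)\le \omega_1\rangle$. For each $\alpha$ of cofinality $\omega_1$ this yields a continuous increasing cofinal enumeration $\langle x^\alpha_\xi : \xi<\omega_1\rangle$, and the sequence is coherent across different $\alpha$'s. Next, using $\diamondsuit_{\omega_2}$ to enumerate potential bad open covers, I would recursively declare the basic neighborhoods of each $\alpha \in[\omega_1,\omega_2)$ to be tail sets along $\langle x^\alpha_\xi\rangle$, suitably amalgamated with the topology built so far so as to defeat the bad cover guessed at stage $\alpha$. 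Points below $\omega_1$ would be taken to be isolated, mirroring an iterated one-point Lindel\"{o}fication guided by the $\square$-data.

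To verify Lindel\"{o}fness I would use an elementary submodel reflection argument: given an open cover $\mc{U}$ of $X$, choose $M\prec H(\theta)$ of size $\aleph_1$ containing $\mc{U}$ and the $\square$-sequence. The coherence of the $C_\alpha$'s together with the $\diamondsuit_{\omega_2}$-anticipation of $\mc{U}$ at some stage $\alpha$ with $M\cap\omega_2\subseteq\alpha$ forces a countable $\mc{U}'\subseteq\mc{U}\cap M$ to cover all of $X$, not merely $M\cap X$. To verify failure of Lindel\"{o}fness on any $Y\in[\omega_2]^{\aleph_1}$, pick $\alpha$ of cofinality $\omega_1$ with $Y\cap\alpha$ cofinal in $\alpha$ (such an $\alpha$ exists by a straightforward counting argument using $|Y|=\aleph_1$ and $\mathrm{cf}(\omega_2)=\omega_2$); the family of tail neighborhoods of $\alpha$ along $\langle x^\alpha_\xi\rangle$ then covers $Y$ but admits no countable subcover, because each tail meets $Y$ in only a proper initial segment of $\{x^\alpha_\xi\}\cap Y$.

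The hard part will be simultaneously satisfying the two opposing requirements: the recursion must kill every potential uncountable-irreducible open cover of $X$, while preserving enough neighborhood structure that the non-Lindel\"{o}f pattern is still present in every $\aleph_1$-sized subspace. Balancing these pulls is the heart of the construction, and the bookkeeping must exploit the fine coherence of the $\square$-sequence to guarantee that the neighborhoods chosen at different stages fit together without accidentally sealing some $Y$ as Lindel\"{o}f. An alternative route, perhaps technically cleaner, is a countable-support iteration of countably closed $\aleph_2$-c.c.\ forcings over a ground model of GCH, where at each stage one either countably refines a given cover of $X$ or adjoins a witness to non-Lindel\"{o}fness of a particular $Y$; the principal difficulty there migrates to preserving GCH and the $\aleph_2$-chain-condition through $\omega_2$ steps, which is where the $\diamondsuit_{\omega_2}$-guessing reenters.
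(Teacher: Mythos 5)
There is a genuine gap here: what you have written is a research plan whose central step you yourself defer (``the hard part will be simultaneously satisfying the two opposing requirements''), and that deferred step is precisely the theorem. The source of the difficulty is that you are trying to diagonalize directly against every $\aleph_1$-sized subspace $Y$ and every open cover of $Y$, while simultaneously preserving Lindel\"{o}fness of $X$; no mechanism is given for either side. The actual construction of Koszmider and the author avoids this by a structural reduction that your proposal never finds: the space is built (by a countably closed forcing, not from $\diamondsuit_{\omega_2}+\square_{\omega_1}$ in a fixed model) to be a Lindel\"{o}f \emph{$P$-space}, and for Lindel\"{o}f $P$-spaces having a Lindel\"{o}f subspace of size $\aleph_1$ is equivalent to having a convergent $\omega_1$-sequence. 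So the target of the recursion/forcing is merely ``no convergent $\omega_1$-sequences,'' a single small-object combinatorial condition that genericity can handle, rather than a simultaneous anti-Lindel\"{o}f requirement on all $2^{\aleph_2}$ subspaces of size $\aleph_1$. Your proposal makes no use of the $P$-space property (indeed, declaring all points below $\omega_1$ isolated and giving higher points ``tail'' neighborhoods does not obviously yield one), and your $\diamondsuit_{\omega_2}$-guessing can only anticipate objects of size $\aleph_1$, whereas the open covers of $X$ you must countably refine need not be that small.

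The one verification you do write out in detail is also incorrect as stated. You claim that for $Y$ of size $\aleph_1$, choosing $\alpha$ of cofinality $\omega_1$ with $Y\cap\alpha$ cofinal in $\alpha$, ``the family of tail neighborhoods of $\alpha$ along $\langle x^\alpha_\xi\rangle$ covers $Y$ but admits no countable subcover.'' A family of tail neighborhoods of the single point $\alpha$ is decreasing, so its union equals its largest member; if it covers $Y$ at all, it has a one-element subcover. The standard way to witness non-Lindel\"{o}fness from such a configuration runs in the opposite direction: one exhibits an $\omega_1$-sequence in $Y$ with no complete accumulation point in $Y$, and covers $Y$ by open sets each meeting the sequence in a bounded set --- which is exactly why the ``convergent $\omega_1$-sequence'' reduction for $P$-spaces is the right lever, and why omitting it leaves your argument without a working engine.
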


The example is constructed by some moderately difficult countably
closed forcing. It is a \emph{$P$-space}, i.e. $G_\delta$'s are
open. The key observation is that for Lindelöf $P$-spaces, having a
Lindelöf subspace of size $\aleph_1$ is equivalent to having a
convergent $\omega_1$-sequence. For $P$-spaces, Kozmider and the
author also proved:

\begin{thm}[\cite{Koszmider2002}]
  It is consistent that $CH$, $2^{\aleph_1} > \aleph_2$, and every
  Lindelöf $P$-space of size $\aleph_2$ has a Lindelöf subspace of
  size $\aleph_1$.
\end{thm}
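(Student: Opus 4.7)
The plan is to use the observation cited just before the theorem to reduce the claim to the consistency of $CH + 2^{\aleph_1} > \aleph_2$ together with the reflection principle: every Lindelöf $P$-space of size $\aleph_2$ has a Lindelöf subspace of size $\aleph_1$ (equivalently, a convergent $\omega_1$-sequence). The natural way to get reflection down to $\aleph_1$ at $\aleph_2$ is to collapse a supercompact cardinal by countably closed forcing; to additionally make $2^{\aleph_1}$ large one must also adjoin many Cohen subsets of $\omega_1$.

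Concretely, begin in a model $V$ of $GCH$ containing a supercompact cardinal $\kappa$ and force with the countably closed two-step iteration $\mathbb{P} = \text{Coll}(\omega_1,<\kappa) * \dot{\text{Add}}(\omega_1,\lambda)$, where $\lambda > \kappa$ is a cardinal of cofinality greater than $\omega_1$. The Lévy factor turns $\kappa$ into $\omega_2$, and the Cohen factor forces $2^{\aleph_1}\geq\lambda>\aleph_2$ in $V[G]$. Countable closure of $\mathbb{P}$ preserves $\omega_1$ and adds no new reals, so $CH$ survives.

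For the reflection step, work in $V[G]$ with a Lindelöf $P$-space $X$ of size $\kappa$. Lift a sufficiently supercompact embedding $j : V \to M$ through $\mathbb{P}$ via a Laver-style master condition to $j : V[G] \to M[G^*]$ in a mild further extension; this yields, back in $V[G]$, an elementary submodel $N \prec H(\theta)^{V[G]}$ with $X \in N$, $|N|=\aleph_1$, $N\cap\kappa\in\kappa$, and $N^\omega\subseteq N$. Set $Y = X \cap N$, which has size $\aleph_1$. Given an open cover $\mathcal{U}$ of $Y$, elementarity lets one refine to a cover $\mathcal{U}' \in N$; since $X$ is a $P$-space, countable intersections are open and the closure $N^\omega\subseteq N$ extracts a countable subcover of $Y$, so $Y$ is Lindelöf of size $\aleph_1$. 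The cited equivalence then converts $Y$ into a convergent $\omega_1$-sequence in $X$.

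The main obstacle lies in combining the two pieces of the forcing. The Lévy collapse readily absorbs the supercompact embedding, but the Cohen-subsets-of-$\omega_1$ factor responsible for $2^{\aleph_1}>\aleph_2$ must be arranged so that it does not destroy the reflection supplied by the collapse; choosing a master condition lying below $j(p)$ for the relevant $p$ in both factors is the delicate step. A secondary difficulty is verifying Lindelöfness of $X\cap N$: traces of Lindelöf spaces on $\aleph_1$-sized elementary submodels need not be Lindelöf in general, and this is precisely where the $P$-space hypothesis is indispensable.
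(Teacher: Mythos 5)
The survey does not prove this theorem at all --- it is quoted from \cite{Koszmider2002} --- so your proposal must be judged against the statement itself and the cited source, and it falls short in two ways. First, the theorem carries no large-cardinal hypothesis. This author is scrupulous about attaching such hypotheses when they are needed (compare ``If it's consistent that there is a supercompact cardinal\dots'' and ``Assume there is a huge cardinal\dots'' later in the same section), so the quoted result is a consistency statement relative to ZFC alone; the Koszmider--Tall argument is a direct forcing construction over a model of $GCH$ (blowing up $2^{\aleph_1}$ with countably closed Cohen-type conditions and attacking the convergent-$\omega_1$-sequence characterization head-on), with no large cardinals. Your route consumes a supercompact cardinal, so even if completed it would establish a strictly weaker theorem than the one stated.

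Second, and more seriously, the reflection step contains a genuine gap exactly where you wave at a ``secondary difficulty'': the Lindel\"ofness of $Y = X \cap N$. The sentence ``elementarity lets one refine to a cover $\mathcal{U}' \in N$'' is the whole content of the problem and is false in general: an arbitrary open cover of $X \cap N$ need not admit a refinement that is an \emph{element} of $N$, because points of $X \cap N$ need not have neighbourhood bases contained in $N$ --- a Lindel\"of $P$-space of size $\aleph_2$ can have points of character $\aleph_2$, and $|N| = \aleph_1$. This is precisely the obstacle the survey isolates in Section 6: one must both preserve Lindel\"ofness under the countably closed quotient forcing (the indestructibility problem, open for general Lindel\"of spaces and not resolved by the $P$-space hypothesis) and arrange that $j''X$ is a subspace of $j(X)$, which is why Theorem \ref{thm6.6} needs first countability and why the Rothberger reflection theorem needs indestructible Rothbergerness. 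Indeed, the companion example from \cite{Koszmider2002} is a Lindel\"of $P$-space of size $\aleph_2$ in which \emph{no} subspace of size $\aleph_1$ --- in particular no trace on an $\omega$-closed elementary submodel --- is Lindel\"of, so no soft elementarity argument of the kind you sketch can succeed; whatever works must exploit the specific model. Closure of $N$ under $\omega$-sequences plus the $P$-space property does give closure of $\tau \cap N$ under countable intersections, but it does not manufacture countable subcovers of covers living outside $N$.
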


\begin{prob}
  Is it consistent with $GCH$ that every Lindelöf $P$-space of size
  $\aleph_2$ has a Lindelöf subspace of size $\aleph_1$?
\end{prob}

As for positive results, the obvious approach is to try a reflection
argument. If one adds additional topological hypotheses, one can use
elementary submodels to obtain some results, see
\cite{Baumgartner2002}. A more vigorous attack can be mounted using
large cardinals. As usual (see \cite{TallProblems} for an exposition), one
wants to show $j"X$ is a Lindelöf subspace of $j(X)$, so one needs to
worry about preservation of Lindelöfness under forcing, and whether
$j"X$ is a subspace of $j(X)$, where $j$ is a generic embedding
obtained from forcing a large cardinal to collapse. One then has:

\begin{thm}[\cite{ScheepersTall}]
  If it's consistent that there is a supercompact cardinal, it's
  consistent with $GCH$ that every Rothberger space of size $\aleph_2$
  and character $\le \aleph_1$ has a Rothberger subspace of size
  $\aleph_1$.
\end{thm}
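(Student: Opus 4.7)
The plan is to apply supercompact reflection in the spirit of Theorem \ref{thm4p5}. Start in a model $V$ of GCH in which $\kappa$ is supercompact, and force with the Levy collapse $\mathbb{P} = \mathrm{Coll}(\omega_1, {<}\kappa)$, which is countably closed, preserves GCH, and makes $\kappa = \aleph_2$ in $V[G]$. Given $X \in V[G]$ that is Rothberger of cardinality $\aleph_2$ and has character at most $\aleph_1$, fix a local base $\{B^x_\alpha : \alpha < \omega_1\}$ at each $x \in X$; the objective is to produce a Rothberger subspace of size $\aleph_1$.

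Pick $\lambda \gg |X|$ and take $j : V \to M$ witnessing $\lambda$-supercompactness of $\kappa$, so $\mathrm{crit}(j) = \kappa$, $j(\kappa) > \lambda$, and $M^\lambda \subseteq M$. Lift $j$ in the standard way: factor $j(\mathbb{P}) \cong \mathbb{P} * \dot{\mathbb{Q}}$, where $\dot{\mathbb{Q}}$ names the tail collapse $\mathrm{Coll}(\omega_1, [\kappa, j(\kappa)))$; force $H \subseteq \mathbb{Q}$ over $V[G]$ and extend to $j : V[G] \to M[j(G)]$ with $j(G) = G * H$. Since $\mathbb{Q}$ is countably closed in $V[G]$, it adds no $\omega$-sequences, so $X$ remains Rothberger in $V[G][H]$ by the indestructibility of the Rothberger property noted above, and the closure of $M$ combines with countable closure of the forcings to give $M[j(G)]^\omega \cap V[G][H] \subseteq M[j(G)]$. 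The crux is then to verify that $j''X \subseteq j(X)$ is a Rothberger subspace in $M[j(G)]$: by elementarity and injectivity of $j$, $j(B^x_\alpha) \cap j''X = j''B^x_\alpha$, so $\{j''B^x_\alpha : \alpha < \omega_1\}$ is a local base at $j(x)$ in $j''X$; the character-$\aleph_1$ hypothesis then makes $x \mapsto j(x)$ a homeomorphism from $X$ onto $j''X$ as computed in $V[G][H]$. Hence $j''X$ is Rothberger in $V[G][H]$, and the $\omega$-closure of $M[j(G)]$ ensures that any Rothberger selection for an $M[j(G)]$-sequence of open covers of $j''X$ lies back in $M[j(G)]$, so $j''X$ is Rothberger in $M[j(G)]$ as well.

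To conclude, $j(\mathbb{P})$ collapses $\kappa$ to $\aleph_1$ inside $M[j(G)]$, so $|j''X|^{M[j(G)]} = \aleph_1$ and therefore $M[j(G)] \models $ ``$j(X)$ has a Rothberger subspace of cardinality $\aleph_1$''. Since $\mathrm{crit}(j) = \kappa > \aleph_1$, we have $j(\aleph_1) = \aleph_1$, and elementarity of the lifted embedding reflects this statement back to $V[G]$, where it asserts that $X$ has a Rothberger subspace of cardinality $\aleph_1$, as required. The principal obstacle I foresee is the homeomorphism step: one must argue carefully that the subspace topology on $j''X$ computed in $M[j(G)]$ really coincides with the $j$-image of the topology on $X$ (not a coarser topology with fewer covers), and this is precisely where character $\leq \aleph_1$ is indispensable---having only ``points $G_\delta$'' would fail to supply local bases $j$ could track point-by-point through elementarity.
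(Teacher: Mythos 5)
Your argument is correct and follows exactly the route the paper indicates: collapse the supercompact to $\omega_2$ by the countably closed Levy collapse, lift the embedding, use indestructibility of the Rothberger property under countably closed forcing to keep $X$ Rothberger after the tail collapse, and use character $\le\aleph_1$ (below the critical point) so that $j''X$ is a homeomorphic copy of $X$ sitting as a subspace of $j(X)$, then reflect. This is the same proof sketched in the paper and given in detail in \cite{ScheepersTall}, so no further comment is needed.
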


The details can also be found in \cite{ScheepersTall} or in my survey
article \cite{TallProblems}. The key point is that Rothberger spaces are
\emph{indestructibly Rothberger}, i.e. they remain Rothberger after
countably closed forcing \cite{ScheepersTall}. Scheepers
\cite{Scheepers2009} later improved this via an ideal game, so as to
only use a measurable cardinal.

If one only assumes the given space is Lindelöf, consistency results
are hard to come by. In \cite{Baumgartner2002} the following result is
obtained:

\begin{thm}\label{thm6.6}
  Assume there is a huge cardinal. Then it is consistent with $GCH$
  that every Lindelöf first countable space of size $\aleph_2$ has a
  Lindelöf subspace of size $\aleph_1$.
\end{thm}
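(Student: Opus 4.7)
The proof follows the generic-embedding reflection template sketched earlier in the excerpt. Begin in a model $V$ of $GCH$ in which $\kappa$ is huge with target $\lambda$, witnessed by $i\colon V \to N$ with $\operatorname{crit}(i) = \kappa$, $i(\kappa) = \lambda$, and ${}^{\lambda}N \subseteq N$. Design a forcing $P \in V$ along Laver-preparation lines that preserves $GCH$, makes $\kappa = \aleph_2$, and absorbs its image: $i(P) \cong P \ast \dot Q$, where $\dot Q$ names a forcing $Q \in V[G]$ that is countably closed in $V[G]$ and, viewed in $N[G]$, collapses $\kappa$ to $\aleph_1$. A natural recipe combines a Laver preparation up to $\kappa$ with $\operatorname{Coll}(\aleph_1, {<}\kappa)$, so that $Q$ is essentially $\operatorname{Coll}(\aleph_1, [\kappa, \lambda))$ as interpreted in $N[G]$.

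Now suppose, in $V[G]$, that $X$ is a first countable Lindelöf space with $|X| = \aleph_2 = \kappa$; fix a countable local base $\{B_n(x)\}_{n<\omega}$ at each $x \in X$. Further force with $Q$ to obtain $V[G][H]$, and inside this extension lift $i$ in the standard way to an elementary $j\colon V[G] \to N[G \ast H]$. Since $\omega < \operatorname{crit}(j) = \kappa$, the family $\{j(B_n(x)) : n < \omega\}$ is a countable local base at $j(x)$ in $j(X)$; the identity $j(B) \cap j''X = j''B$ (immediate from $j(x) \in j(B) \Leftrightarrow x \in B$ by elementarity) confirms that $j \upharpoonright X \colon X \to j''X$ is a homeomorphism onto the subspace $j''X$ of $j(X)$. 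By our choice of $Q$, $|j''X|^{N[G \ast H]} = |\kappa|^{N[G \ast H]} = \aleph_1^{N[G \ast H]}$. Hence if $j''X$ is Lindelöf in $N[G \ast H]$, then $N[G \ast H]$ satisfies ``$j(X)$ has a Lindelöf subspace of size $\aleph_1$,'' and since $\aleph_1$ lies below the critical point of $j$, elementarity delivers the desired Lindelöf subspace of $X$ in $V[G]$.

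The main obstacle --- and the technical heart of the proof --- is this last preservation assertion: that the passage from $V[G]$ to $V[G][H]$ (hence to the inner model $N[G \ast H]$) does not destroy Lindelöfness of $X$, even though $Q$ can add new subfamilies of the topology. A first countable Lindelöf space is not in general indestructibly Lindelöf, so one must exploit the specific structure: every open cover of $X$ in $V[G][H]$ refines to a cover by basic neighborhoods, which is encoded by a function $X \to \omega$, and the countable closure of $Q$ combined with first countability should permit a fusion/density argument showing that any $Q$-name for such a cover can be forced, below any given condition, to contain a countable subcover drawn from $V[G]$. Hugeness (rather than mere supercompactness) is used precisely so that $i(P)$ factors with $Q$ naturally living over $V[G]$ and carrying both the closure needed to run this preservation lemma and the collapsing property needed to drop $|j''X|$ to $\aleph_1^{N[G \ast H]}$; once the preservation lemma is secured, the reflection argument concludes the proof.
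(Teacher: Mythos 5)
Your reflection skeleton (lift the embedding, use first countability to see that $j''X$ with the subspace topology is a copy of $X$, transfer ``has a Lindelöf subspace of size $\aleph_1$'' back by elementarity) is the right template, but you have set up the collapse the wrong way around, and that choice is what leaves the hole you yourself flag. You arrange matters so that $\kappa$ becomes $\aleph_2$ and a countably closed quotient $Q$ then collapses $\kappa$ to $\aleph_1$; this is the supercompact-style arrangement of Theorem \ref{thm4p5} and of the Scheepers--Tall Rothberger reflection. In that arrangement everything hinges on $X$ remaining Lindelöf after forcing with the countably closed $Q$ --- that is, on $X$ being \emph{indestructible}. This is not a routine ``fusion/density argument'': it is precisely the indestructibility problem that Section 4 of this paper records as open (even under $CH$, even for the weaker hypothesis ``points $G_\delta$''), and it is exactly why Theorem \ref{thm4p5} must carry indestructibility as a hypothesis rather than prove it. Concretely, first countability does not tame the new covers: an open cover of $X$ by basic neighborhoods in $V[G][H]$ is coded by a function $X\to\omega$, and while countably closed forcing adds no new $\omega$-sequences, it freely adds new functions from the uncountable set $X$ to $\omega$, so there is no reason the new cover refines, or reduces to, anything from $V[G]$. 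As written, your proof reduces the theorem to an open problem. A further symptom that something is off: if your preservation lemma held, hugeness would be superfluous --- a supercompact (or less) already supports your factorization and closure requirements --- whereas the theorem is only known from a huge cardinal.

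The proof in \cite{Baumgartner2002} avoids this trap by using the opposite, Kunen-style arrangement \cite{Kunen1976}: the huge cardinal $\kappa$ (the critical point) is collapsed to $\aleph_1$ and its target $i(\kappa)$ to $\aleph_2$, so that the generic embedding moves $\aleph_1$ to $\aleph_2$ and $j''X$ (for $X$ of size $\aleph_2=i(\kappa)$) automatically has size $\aleph_1$ in the target model without any auxiliary countably closed collapse of $X$'s cardinality; hugeness is what puts $j''i(\kappa)$ inside the target model with the right size. The preservation of Lindelöfness then has to be verified for the quotient of the Kunen collapse rather than for a countably closed collapse, and that is a forcing for which it can actually be done --- indeed it works for arbitrary Lindelöf spaces, first countability being needed only to make $j''X$ a subspace of $j(X)$ (which is how the paper extracts the ``weaker Lindelöf topology'' corollary that follows Theorem \ref{thm6.6}). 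To repair your argument you would need either to prove the indestructibility of first countable Lindelöf spaces under countably closed forcing (open) or to switch to the Kunen arrangement and prove preservation for its quotient instead.
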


The proof uses the Kunen-collapse of the huge cardinal $\kappa$ to
$\aleph_1$ and its target $j(\kappa)$ to $\aleph_2$
\cite{Kunen1976}. Since Lindelöf first countable $T_2$ spaces have
cardinality $\le 2^{\aleph_0}$, this is not so interesting.

The problem of getting $j"X$ to be a subspace of $j(X)$ leads me to
propose the following problem:

\begin{prob}
  If $X$ is Lindelöf of size $\aleph_2$, does there exist a subspace
  of $X$ of size $\aleph_1$ with a weaker Lindelöf topology?
\end{prob}

\begin{thm}
  If it is consistent that there is a huge cardinal, then it is
  consistent with $GCH$ that if $X$ is Lindelöf of size $\aleph_2$,
  then there is a subspace of $X$ of size $\aleph_1$ with a weaker
  Lindelöf topology.
\end{thm}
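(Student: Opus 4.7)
Plan. The proof uses the huge-cardinal reflection template of Theorem~\ref{thm6.6}, with a key simplification: asking only for a weaker Lindel\"of topology, rather than an honest Lindel\"of subspace, removes the first-countability hypothesis needed there.

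Start with $V\models\mathrm{GCH}$ containing a huge cardinal $\kappa$ witnessed by $j:V\to M$ with $j(\kappa)=\lambda$ and $M^{\lambda}\subseteq M$. Apply the Kunen-style collapse that forces $\kappa$ to $\aleph_1$ and $\lambda$ to $\aleph_2$, preserves $\mathrm{GCH}$, and (after a further generic extension to a model $N$) lifts $j$ to an elementary embedding $j^{*}:V[G]\to N$; by the standard master-condition construction, $\aleph_1$ and $\aleph_2$ are preserved and $j^{*}\restriction X\in N$ for every $X\in V[G]$ of size $\le\lambda$.

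Now fix $X\in V[G]$ Lindel\"of of cardinality $\aleph_2=\lambda$, and work in $N$. Let $Y:=\{j^{*}(x):x\in X\}\subseteq j^{*}(X)$; by injectivity of $j^{*}$ and preservation of cardinals, $|Y|^{N}=\lambda=j^{*}(\kappa)$. Equip $Y$ with the image topology
\[
\sigma:=\{\,\{j^{*}(x):x\in U\}:U\in\tau_{X}\,\},
\]
the unique topology making $j^{*}\restriction X$ a homeomorphism from $(X,\tau_X)$ onto $(Y,\sigma)$; since $j^{*}\restriction X$ and $\tau_X$ lie in $N$, so does $\sigma$. For each $U\in\tau_X$, elementarity and injectivity of $j^{*}$ yield $j^{*}(U)\cap Y=\{j^{*}(x):x\in U\}$, and $j^{*}(U)\in\tau_{j^{*}(X)}$; hence every member of $\sigma$ is open in the subspace topology on $Y$ inherited from $j^{*}(X)$, so $\sigma$ is weaker than that subspace topology. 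Via the homeomorphism, $(Y,\sigma)$ is Lindel\"of in $N$, provided the lifting forcing $V[G]\to N$ preserves the Lindel\"of property of $X$; this is the heart of the argument and is handled by the standard closure built into the huge-cardinal construction.

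Therefore $N$ satisfies the first-order statement ``there is a subspace of $j^{*}(X)$ of cardinality $j^{*}(\kappa)$ carrying a Lindel\"of topology weaker than its subspace topology.'' By elementarity of $j^{*}:V[G]\to N$ applied with parameters $X,\tau_X,\kappa$, the corresponding statement holds in $V[G]$: $X$ has a subspace of size $\aleph_1$ with a weaker Lindel\"of topology, as required. The principal obstacle is precisely the preservation-of-Lindel\"ofness step; the obstruction that forced first countability in Theorem~\ref{thm6.6}---namely the mismatch between the image topology and the subspace topology on $\{j^{*}(x):x\in X\}$---is here exactly the source of the required weakening, turning what was formerly the main defect of the approach into its mechanism.
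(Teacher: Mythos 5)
Your proposal is correct and follows essentially the same route as the paper, which derives this theorem as a corollary of the proof of Theorem \ref{thm6.6}: run the Kunen huge-cardinal collapse, observe that the image of $X$ under the lifted embedding carries a Lindelöf topology (the homeomorphic copy of $\tau_X$) that is merely \emph{weaker} than its subspace topology in $j(X)$ when first countability is dropped, and pull back by elementarity. Your closing observation --- that the failure of $j``X$ to be an honest subspace is precisely what the weakened conclusion absorbs --- is exactly the paper's remark that first countability was only used to make $j``X$ a subspace of $j(X)$.
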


This is a corollary of the proof of \ref{thm6.6}. First countability is
only used to get $j"X$ to be a subspace of $j(X)$.

I do not know whether the \cite{Koszmider2002} example is a negative
solution to this problem.

In \cite{Hajnal1976}, Hajnal and Juhász prove:

\begin{thm}
  $CH$ implies every uncountable compact $T_2$ space has a Lindelöf
  subspace of size $\aleph_1$.
\end{thm}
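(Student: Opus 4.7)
First dispose of the easy case: if $|X|\le\aleph_1$, then $X$ itself is a compact, hence Lindel\"of, subspace, and uncountability forces $|X|=\aleph_1$, so we are done. Henceforth assume $|X|>\aleph_1$.

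My principal attempt is to locate a point $p\in X$ at which one can build a ``convergent $\omega_1$-sequence.'' Suppose $p$ is a non-isolated point and admits a strictly decreasing family $\{U_\alpha:\alpha<\omega_1\}$ of open neighbourhoods forming a neighbourhood base at $p$ (so every open set containing $p$ contains some $U_\alpha$, and $U_\beta\subseteq U_\alpha$ whenever $\alpha<\beta$). Choose inductively distinct $x_\alpha\in U_\alpha\setminus\{p\}$, possible because $p$ is a non-isolated point of an uncountable space. Put $Y=\{p\}\cup\{x_\alpha:\alpha<\omega_1\}$; then $|Y|=\aleph_1$, and $Y$ is Lindel\"of, since given an open cover $\mc{U}$ of $Y$, any $V\in\mc{U}$ with $p\in V$ satisfies $V\supseteq U_{\alpha_0}$ for some $\alpha_0<\omega_1$, hence contains every $x_\alpha$ with $\alpha\ge\alpha_0$, leaving at most countably many points $\{x_\alpha:\alpha<\alpha_0\}$ to be covered by countably many further members of $\mc{U}$.

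\textbf{Main obstacle.} A compact $T_2$ space of size $>\aleph_1$ need not contain any $p$ admitting such a decreasing $\omega_1$-base: for example, $\{0,1\}^{\aleph_2}$ has every point of character $\aleph_2$, and even when the character equals $\aleph_1$ one cannot in general close a neighbourhood base under countable intersections (the trouble is at countable limit ordinals $<\omega_1$, where an $\omega$-intersection of opens need not be open). For such cases my fallback is a \v{C}ech--Pospi\v{s}il Cantor-scheme argument. If $X$ is not scattered, its perfect kernel $K=X^{(\infty)}$ is a closed non-empty subspace without isolated points, inside which one constructs closed non-empty $\{F_s:s\in 2^{<\omega}\}$ with $F_{s\frown 0}\cap F_{s\frown 1}=\emptyset$ and $F_{s\frown i}\subseteq F_s$; under $CH$, picking $x_f\in\bigcap_n F_{f\restriction n}$ for each $f\in 2^\omega$ yields $2^{\aleph_0}=\aleph_1$ distinct points. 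If $X$ is scattered, then along the decreasing Cantor--Bendixson sequence $X^{(\alpha)}$ (each closed, hence compact Lindel\"of) the cardinalities drop from $|X|$ eventually to $0$, and one selects a level where the cardinality is exactly $\aleph_1$.

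The core technical difficulty lies in the non-scattered, large-character case: ensuring that the $\aleph_1$-sized set produced by the Cantor scheme is actually Lindel\"of, not merely of the right cardinality. One route is to refine the scheme so that $\{x_f:f\in 2^\omega\}$ is a closed copy of $2^\omega$, which is compact of size $\aleph_1$ under $CH$; but whether such a refinement is always achievable in a compact $T_2$ space with no point of small character is precisely where $CH$ has to do its heavy lifting, and is the delicate step that binds the two halves of the argument together.
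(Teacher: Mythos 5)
The paper offers no proof of this theorem --- it is quoted from \cite{Hajnal1976} --- so there is nothing internal to compare against; judged on its own, your proposal is not a proof, and you candidly say as much. Your primary construction is fine as far as it goes: a non-isolated point $p$ with a strictly decreasing $\omega_1$-sequence of open neighbourhoods $\{U_\alpha\}$ such that every neighbourhood of $p$ absorbs some $U_\alpha$ does yield a Lindel\"of ``convergent $\omega_1$-sequence'' of size $\aleph_1$, and the case $|X|\le\aleph_1$ is trivial. But you never produce such a point, and the entire difficulty of the theorem is concentrated there. Your fallback is worse than delicate --- it is unworkable. A compact $T_2$ space with no nontrivial convergent sequences (e.g.\ $\beta\mb{N}$, or any infinite closed subspace of $\beta\mb{N}\setminus\mb{N}$) contains no copy of $2^\omega$ whatsoever, since a homeomorph of the Cantor set carries nontrivial convergent sequences; so ``refine the Cantor scheme to get a closed copy of $2^\omega$'' cannot be the binding step. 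The unrefined scheme gives $\aleph_1$ branch points under $CH$, but, as you admit, with no Lindel\"of structure, so nothing is proved. The scattered case also fails as stated: for the one-point compactification of a discrete set of size $\aleph_2$, the Cantor--Bendixson derivatives have cardinalities $\aleph_2$, $1$, $0$, and no level has size $\aleph_1$ (here a Lindel\"of subspace of size $\aleph_1$ is easy to exhibit directly, but not by your recipe). Note also that a level $X^{(\alpha)}\setminus X^{(\alpha+1)}$ is discrete, hence never Lindel\"of when uncountable; only the closed sets $X^{(\alpha)}$ could serve, and their cardinalities can skip $\aleph_1$ entirely.

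What is missing is a genuinely different organizing idea. The published argument does not hunt for points of small character; it runs through cardinal functions --- in particular the spread $s(X)$ and the Hajnal--Juh\'asz inequality $|X|\le 2^{s(X)}$, which under $CH$ disposes of the case $s(X)=\omega$ outright (then $|X|=\aleph_1$ and $X$ itself is the subspace) and leaves one to extract a Lindel\"of set of size $\aleph_1$ from an uncountable discrete subspace together with a suitable compact ``kernel'' of its closure; making that kernel small is where $CH$ works. (A much later ZFC theorem of Juh\'asz and Szentmikl\'ossy, that compact $T_2$ spaces of cardinality greater than $2^{\aleph_0}$ contain convergent $\omega_1$-sequences, would also close your gap, but it is far from elementary and certainly cannot be waved in.) As it stands, your write-up establishes only the trivial case and a conditional construction whose hypothesis is unverified; the two ``halves'' you describe never actually join.
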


\begin{defn}
  A space $X$ is \emph{countably tight} if whenever $x \in \overline{Y}$, $Y \subseteq X$, there is a countable $Z \subseteq Y$
  such that $x \in \overline{Z}$.
\end{defn}

Surprisingly, for spaces which are not countably tight, they do not
need $CH$. Thus we have:

\begin{prob}
  Does $ZFC$ imply every uncountable countably tight compact space has a
  Lindelof subspace of size $\aleph_1$?
\end{prob}

I. Juhász has reminded me that even the first countable case is open.

\section{Products of Lindelöf spaces}

One of the most important results in General Topology is the Tychonoff
Theorem: \emph{the product of any family of compact spaces is
  compact}.  It is natural to attempt to generalize this to Lindelöf
spaces, but the Sorgenfrey Line (right half-open interval topology on
$\mb{R}$) provides an easy counterexample. It is not clear whether
there is any reasonable upper bound on the Lindelöf number of a
product of Lindelöf spaces even in simple cases. For concreteness we
ask:

\begin{prob}
  Is $L(X \times Y) \le 2^{\aleph_0}$, where $X$ and $Y$ are Lindelöf,
  and $L(Z)$ is the least cardinal such that every open cover of $Z$
  has a subcover of size $\le \kappa$?
\end{prob}

Surprsingly, the only known bound for the Lindelöf number of $X \times
Y$, for $X$, $Y$ Lindelöf, is the first strongly compact cardinal
\cite{Juhasz1984}! Shelah \cite{Shelah1996} forced a variation of his
large Lindelöf space with points $G_\delta$ to produce Lindelöf spaces
$X$, $Y$ with the Lindelöf number of $X \times Y > 2^{\aleph_0}$. This
was simplified by \cite{Gorelic1994}. There has been no progress
since. The strongly compact bound is obtained by mindlessly
generalizing the proof of the Tychonoff Theorem. Surely one ought to
be able to do better. Perhaps one can obtain some results by
strengthening the Lindelöf property to, e.g.  Rothberger.

\section{The Lindelöf number of the $G_\delta$ topology on a Lindelöf space}

\begin{prob}
Can the Lindelöf number of the topology generated by making $G_\delta$'s
open exceed $2^{\aleph_0}$ if the original topology is Lindelöf?
\end{prob}

The \cite{Gorelic1993} or \cite{Shelah1996} examples trivially show
that it can, but is there an example in ZFC?

\section{Conclusion}
I thank the referee for
several improvements.

One could add more problems to the list we have given here, but these will
keep the reader busy enough.

\nocite{*}
\bibliographystyle{amsalpha}
\bibliography{lindelof}
{\rm Franklin D. Tall, Department of Mathematics, University of
Toronto, Toronto, Ontario M5S 2E4, Canada}

{\noindent\it e-mail address:} {\rm tall@math.utoronto.ca}
\end{document}